
\documentclass[10pt,draft,reqno]{amsart}
      \makeatletter
     \def\section{\@startsection{section}{1}%
     \z@{.7\linespacing\@plus\linespacing}{.5\linespacing}%
     {\bfseries
     \centering
     }}
     \def\@secnumfont{\bfseries}
     \makeatother
 \setlength{\textheight}{19.5 cm}
\setlength{\textwidth}{12.5 cm}


\newtheorem{theorem}{Theorem}[section]
\newtheorem{lemma}[theorem]{Lemma}

\newtheorem{corollary}[theorem]{Corollary}
\theoremstyle{definition}

\theoremstyle{remark}
\newtheorem{remark}[theorem]{Remark}
\numberwithin{equation}{section}
\setcounter{page}{1}


\def \a{{\alpha}}
\def \b{{\beta}}
\def \D{{\Delta}}
\def \DD{{\mathcal D}}
\def \d{{\delta}}
\def \e{{\varepsilon}}

\def \g{{\gamma}}
\def \G{{\Gamma}}
\def \k{{\kappa}}
\def \l{{\lambda}}
\def \mh{{\hat {\mu} }}
\def \n{{\eta}}
\def \o{{\omega}}
\def \O{{\Omega}}
\def \p{{\varphi}}
\def \t{{\vartheta}}
\def \ttau{{\theta}}
\def \m{{\mu}}
\def \s{{\sigma}}
\def \Y{{\Theta}}

\def \aa{{d}}
\def \A{{\mathcal A}}
\def \bp{{{\mathcal B}_p}}
\def \C{{\mathcal C}}
\def \cc{{{\underline{c}}}}
\def \E{{\bf E}\, }
 \def \N{{\bf N}}
 \def \NN{{\bf N}}
 \def \pp{ {{\bar{p}}}}
\def \P{{\bf P}}
\def \q{{\quad}}
\def \Q{{\bf C}}
\def \qq{{\qquad}}
\def \R{{\bf R}}
\def \ra{{\rightarrow}}
\def \rr{ {{\bf r}}}
\def \T{{\bf T}}
\def \td{{\widetilde D}}
\def \ua{{\underline{a}}}
\def \uc{{\underline{c}}}
\def \ud{{\underline{d}}}
\def \uu{ {{\underline{\t}}}}

\def \X{{\cal X}}
\def \dd{{\rm }}
 \def \ut{{\underline{\tau}}}

\def \noi{{\noindent}}

 
  
\def\E{{\mathbb E}}

\def \T{{\mathbb T}}

\def\P{{\mathbb P}}

\def\R{{\mathbb R}}

\def\Z{{\mathbb Z}}
 
\def\Q{{\mathbb Q}}

\def\N{{\mathbb N}}

\def\NS{{\mathbb N}^*}

\def\C{{\mathbb C}}  
  

\font\phh=cmcsc10
at  8 pt
\scrollmode

\font\gsec= cmb10 at 10 pt
\hfuzz =0pt 
  \font\ggum= cmb10 at 10,8  pt
\font\phh=cmr10 at  8,2 pt
\font\ph=cmcsc10 at 10 pt  \scrollmode
\font\gsec= cmb10 at 9,5  pt
\font\gssec= cmb10 at 8,2  pt


 \title[A sharp correlation inequality]{A sharp correlation inequality   with an application to almost sure   local
 limit theorem} 
 
  \author{ Michel Weber 
}
\address{ Michel Weber: IRMA, Universit\'e
Louis-Pasteur et C.N.R.S.,   7  rue Ren\'e Descartes, 67084
Strasbourg Cedex, France. }
\email{michel.weber@math.unistra.fr
}
 \urladdr{http://www-irma.u-strasbg.fr/$\sim$weber/}
\begin{document}
 \maketitle
   
\vskip -15pt
 \centerline{---------------------------------------------------------------------------------------------------------}
\vskip - 25pt
 \begin{abstract}\noi   We   prove   a
   new  sharp correlation inequality for sums of i.i.d. square integrable lattice distributed random variables. We also apply it to
establish  an almost sure local limit theorem for iid   square integrable random variables taking
values in an arbitrary lattice. This extends a recent similar result jointly obtained with
Antonini-Giuliano, under a slightly stronger absolute moment assumption (of order $2+u$ with $u>0$). The approach used to treat the  case
$u>0$ breaks down when
$u=0$.   MacDonald's concept of the Bernoulli
part of a random variable is used in a crucial way to remedy this.
     \end{abstract}
   \vskip 4pt  
\noi {\gssec 2010 AMS Mathematical Subject Classification}: {\phh Primary: 60F15, 60G50 ;
Secondary: 60F05}.  \par\noi  
{{\gssec Keywords and phrases}: {\phh Correlation inequality,  i.i.d. random variables,
lattice distributed,  Bernoulli part, square integrable, local limit theorem, almost sure version}. } 
\vskip  3pt
 \centerline{------------------------------------------------------------------------------------------------------------}
\vskip   5pt
 \noi \centerline{This is the extended version of a paper that is to appear in}
 \vskip 1pt
\centerline{{\it Probability and Mathematical Statistics}.} 
   
\section{Introduction} 
 Throughout this work, we are concerned with   i.i.d. square integrable random
variables having lattice distribution. Let  $v_0 $ and $D>0$ be some reals and let   $\mathcal L(v_0,D)$ be the lattice defined by the
sequence $v_k=v_0+Dk$, $k\in \Z$. Consider
  a random variable $X$ such that  $\P\{X
\in\mathcal L(v_0,D)\}=1$. We assume that $D$ (the span of $X$) is maximal, i.e. there is no integer multiple $D'$ of $D$ for which $\P\{X
\in\mathcal L(v_0,D')\}=1$. 
  We further assume
  \begin{equation}\label{moment}\hbox{$ \E X$ \ and \  $ \E X^2$ \quad are   finite}  . 
 \end{equation}
Let $\m =\E X$ and $\s^2=\E X^2-(\E X)^2$, which we assume to be positive (otherwise $X$ is degenerated). Under these assumptions, the local limit theorem
holds.    Let  $ \{X_k, k\ge 1\}$ be independent copies of
$X$, and consider their partial sums
$S_n=X_1+\ldots +X_n$, $n\ge 1$. To be precise, we have   (\cite{G}, \S43), 
   \begin{equation}\label{llt}  \lim_{n\to \infty}\sup_{N=v_0n+Dk }\Big|  \sqrt n \P\{S_n=N\}-{D\over  \sqrt{ 2\pi}\s}e^{- 
{(N-n\m )^2\over  2 n \s^2} }\Big|= 0. 
\end{equation} 

 Now let   $\k_n\in \mathcal L(nv_0,D)$, $n=1,2,\ldots$ be a   sequence of reals satisfying
\begin{equation}\label{2}  \lim_{n\to \infty} { \k_n-n\m   \over    \sqrt{  n}  }= \k .
\end{equation}  
  The central result of the paper is the following correlation inequality which  we believe to be  hardly improvable. 
   \begin{theorem} \label{t1} Assume that
  \begin{equation}\label{basber}\hbox{ $ \P\{X=k\}\wedge\P\{X=k+1\}\,>0$  \qquad  for some   $k\in\Z$.}   
 \end{equation}  Then there  exists a constant
$C $  depending on the sequence $\{\k_n, n\ge 1\}$ such that for all $1\le m<n$,
\begin{eqnarray*} & &\sqrt{nm} \, \Big|\P\{S_n=\k_n, S_m=\k_m\}-\P\{S_n=\k_n \}\P\{  S_m=\k_m\} \Big|  
\cr  &\le &  C \Big\{     {   1\over 
 \sqrt{n\over m}-1   } +      {n^{1/2}  \over
 (n-m) ^{3/2}} \Big\} .
\end{eqnarray*}   
 \end{theorem}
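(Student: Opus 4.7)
The plan is to exploit independence of $S_m$ and the increment $S_n-S_m\stackrel{d}{=}S_{n-m}$, giving
\[
\P\{S_n=\k_n,S_m=\k_m\}\;=\;\P\{S_m=\k_m\}\,\P\{S_{n-m}=\k_n-\k_m\},
\]
so that the correlation equals $\P\{S_m=\k_m\}\bigl[\P\{S_{n-m}=\k_n-\k_m\}-\P\{S_n=\k_n\}\bigr]$. Since the local limit theorem \eqref{llt} forces $\sqrt{nm}\,\P\{S_m=\k_m\}=O(\sqrt n)$, the theorem reduces to bounding the bracketed difference by $C\bigl[(\sqrt n\,(\sqrt{n/m}-1))^{-1}+(n-m)^{-3/2}\bigr]$.

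The main obstacle is that \eqref{llt}, under only the second-moment hypothesis \eqref{moment}, provides an $o(1)$ error with no rate. To obtain a polynomial rate I would invoke MacDonald's Bernoulli-part decomposition, which is exactly what hypothesis \eqref{basber} makes available: each $X_i$ can be written as a mixture having, with explicit positive probability, an elementary $\{k,k+1\}$-valued Bernoulli component. Conditioning on the number of Bernoulli summands exhibits $S_k$ as a centered binomial shift plus an independent remainder, while on the Fourier side $|\E e^{itX}|$ becomes bounded away from $1$ outside a neighborhood of $0$ by a quantity depending only on the Bernoulli mass. The expected output is a quantitative local limit theorem of the form
\[
\P\{S_k=N\}\;=\;\frac{D}{\s\sqrt{2\pi k}}\exp\!\Bigl(-\frac{(N-k\m)^2}{2k\s^2}\Bigr)\;+\;R_k(N),\qquad |R_k(N)|\le \frac{C}{k^{3/2}},
\]
uniformly in $N$ on the lattice, the constant depending only on the law of $X$.

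Substituting this expansion into the bracketed difference gives a principal term $\phi_{n-m}(y_n-y_m)-\phi_n(y_n)$, where $\phi_k$ is the Gaussian density appearing in \eqref{llt} and $y_k:=\k_k-k\m$, plus remainders. I would split the principal term into a normalization contribution $(1/\sqrt{n-m}-1/\sqrt n)$ times an exponential, and an exponent contribution. Using $\sqrt n-\sqrt{n-m}=m/(\sqrt n+\sqrt{n-m})$ and the identity $(\sqrt{n/m}-1)^{-1}=\sqrt m(\sqrt n+\sqrt m)/(n-m)$, the normalization part, after multiplication by $\sqrt{nm}\,\P\{S_m=\k_m\}=O(\sqrt n)$, is bounded up to constants by $(\sqrt{n/m}-1)^{-1}$, which is the first term of the stated bound. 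The exponent part, treated by the mean-value theorem together with the boundedness of $y_k/\sqrt k$ guaranteed by \eqref{2}, reduces to $\bigl|(y_n-y_m)^2/(n-m)-y_n^2/n\bigr|=O\bigl(\sqrt m/(\sqrt n+\sqrt m)\bigr)$, and an elementary estimate shows this is again controlled by the first term.

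Finally, the remainders $R_k$ generate the second half of the bound: the dominant contribution is $R_{n-m}(\k_n-\k_m)=O((n-m)^{-3/2})$, which after multiplication by $\sqrt{nm}\,\P\{S_m=\k_m\}$ produces $\sqrt n/(n-m)^{3/2}$; the contributions of $R_n$ and $R_m$ are of strictly smaller order. The main difficulty, as already noted, lies in the Bernoulli-part LLT of the second paragraph; once that is established the rest of the argument is a careful bookkeeping of Gaussian comparisons, uniformly in $1\le m<n$, with the near-diagonal regime $m\approx n$ (where both terms of the bound simultaneously blow up) requiring the most attention.
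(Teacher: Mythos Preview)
Your reduction in the first paragraph to bounding $\sqrt n\,\bigl[\P\{S_{n-m}=\k_n-\k_m\}-\P\{S_n=\k_n\}\bigr]$ matches the paper exactly, and the Gaussian-comparison bookkeeping you sketch in paragraphs three and four is close in spirit to what is eventually needed. The genuine gap is the second paragraph: the quantitative local limit theorem you assert,
\[
\P\{S_k=N\}=\frac{D}{\s\sqrt{2\pi k}}\,e^{-(N-k\m)^2/(2k\s^2)}+R_k(N),\qquad |R_k(N)|\le Ck^{-3/2},
\]
is \emph{false} under $\E X^2<\infty$ alone, and hypothesis \eqref{basber} does not restore it. The Ibragimov--Linnik criterion quoted right after \eqref{GW} says that even the far weaker remainder $O(k^{-1/2-\a})$ with $0<\a<1/2$ is \emph{equivalent} to the tail condition $\int_{|x|\ge u}x^2\,dF=O(u^{-2\a})$; with only a second moment this integral can tend to $0$ arbitrarily slowly, so in general no polynomial rate exists, let alone the one corresponding to $\a=1$. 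The Bernoulli part bounds $|\E e^{itX}|$ away from $1$ off a neighbourhood of $t=0$, but the rate in the LLT is governed by the expansion of the characteristic function \emph{near} $0$, where nothing beyond the second-order term is available. Your sentence ``conditioning on the number of Bernoulli summands exhibits $S_k$ as a centered binomial shift plus an independent remainder'' is the right seed, but you then collapse it to an \emph{unconditional} rate for $\P\{S_k=N\}$, and that inference fails: averaging the conditional Gaussian over the non-Bernoulli part reintroduces exactly the rate obstruction tied to the tails of $X$.

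What the paper does instead is keep the conditioning throughout and never attempt to identify $\P\{S_k=N\}$ with a single Gaussian plus small error. Writing $S_n=W_n+DM_n$ with $M_n$ a Bernoulli sum of random length $B_n$, one conditions on the $\s$-algebra of $\{(V_j,\e_j)\}_{j\le n}$; under this conditioning both $\P_{\!L}\{S_n=\k_n\}$ and $\P_{\!L}\{S_n-S_m=\k_n-\k_m\}$ become local probabilities of \emph{pure Bernoulli sums} (of lengths $B_n$ and $B_n-B_m$), and for these the LLT has remainder $o(N^{-3/2})$. The Gaussian comparison---essentially your paragraphs three and four---is then carried out at this conditional level, where the two Gaussians have random centers $\k_n-W_n-B_n/2$ and $(\k_n-\k_m)-(W_n-W_m)-(B_n-B_m)/2$ and random scale parameters $B_n$, $B_n-B_m$. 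Only afterwards is the $(V,\e)$-expectation taken, using an exponential bound $\P\{B_n\le\theta n\}\le\rho^n$ to keep $B_n$ and $B_n-B_m$ in their typical ranges, and the second moment of $W_j+DB_j/2$ (bounded by that of $S_j$) to control the random centers. The coupling through a \emph{common} conditioning is the missing idea: because the two conditional Gaussians share the variables $(W_n,W_m,B_n,B_m)$, their difference can be bounded pathwise and then integrated, bypassing entirely the unavailable rate-LLT for $S_k$.
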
  

 \begin{corollary} \label{cort1}      Let $0<c<1$. Under assumption (\ref{basber}),  there   exists a constant
$C_{  c} $  such that for all $1\le m\le cn$,
\begin{eqnarray*} \sqrt{nm} \,\Big|\P\{S_n=\k_n, S_m=\k_m\}-\P\{S_n=\k_n \}\P\{  S_m=\k_m\} \Big|  
 &\le &  C_{c} \, \sqrt{{m\over n}}. \end{eqnarray*}
 \end{corollary}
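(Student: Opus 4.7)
The plan is to derive Corollary \ref{cort1} as a direct algebraic consequence of Theorem \ref{t1}. Under the additional constraint $m\le cn$ with $c<1$ fixed, both error terms appearing in the bound of Theorem \ref{t1} must be shown to be of order $\sqrt{m/n}$, and this requires nothing beyond elementary manipulations.

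For the first term, I would rationalize
\[
\frac{1}{\sqrt{n/m}-1}=\frac{\sqrt m\,(\sqrt n+\sqrt m)}{n-m},
\]
and then use the hypothesis $m\le cn$ to get $n-m\ge(1-c)n$ and $\sqrt n+\sqrt m\le 2\sqrt n$. This immediately yields a $c$-dependent multiple of $\sqrt{m/n}$. For the second term $n^{1/2}/(n-m)^{3/2}$, the same lower bound $n-m\ge(1-c)n$ reduces it to a constant times $1/n$, and since $m\ge 1$ one has $1/n\le\sqrt{m/n}$, which absorbs this contribution into the target form.

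No substantive obstacle is expected; the constant would take the shape $C_c=C\bigl[\frac{2}{1-c}+(1-c)^{-3/2}\bigr]$ in terms of the constant $C$ of Theorem \ref{t1}. The corollary is essentially a repackaging of Theorem \ref{t1} in the regime $m\le cn$, where the first of the two error terms is what actually determines the rate $\sqrt{m/n}$ and the second is simply dominated by it.
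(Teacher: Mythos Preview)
Your proposal is correct and matches the paper's own derivation essentially step for step: the paper likewise reduces Theorem \ref{t1} under the constraint $m\le cn$ by bounding the first term via $\dfrac{1}{\sqrt{n/m}-1}=\dfrac{\sqrt{m/n}}{1-\sqrt{m/n}}\le \dfrac{1}{1-\sqrt c}\sqrt{m/n}$ and the second via $\dfrac{\sqrt n}{(n-m)^{3/2}}\le (1-c)^{-3/2}\,n^{-1}\le (1-c)^{-3/2}\sqrt{m/n}$. The only cosmetic difference is your rationalization yielding the constant $2/(1-c)$ in place of the paper's $1/(1-\sqrt c)$ for the first term.
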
 
\begin{remark}\label{bbl}\rm    
Condition (\ref{basber}) seems to be somehow artificial. It is for instance clearly not   satisfied if  $\P\{ X\in
{\mathcal N}\} =1$   where
${\mathcal N}=\{\nu_j, j\ge 1\}$ is an increasing sequence of integers such that $\nu_{j+1}-\nu_j >1$ for all $j$.
  This already defines  a large class of examples. However, condition (\ref{basber}) is   natural in our setting. By the local limit
theorem (\ref{llt}), under condition (\ref{2}),
   $$ \lim_{n\to \infty}\sqrt n \P\{S_n=\ell_n\}= {D\over  \sqrt{ 2\pi}\s}e^{- 
{\k^2\over  2  \s^2} }, \qq (\ell_n\equiv \k_n\ {\rm or}\ \ell_n\equiv\k_n+1)  . 
$$
Then for some $n_\k<\infty$, $\P\{S_n=\k_n\}\wedge \P\{S_n=\k_n+1\}>0 $ if $n\ge n_\k$. Changing $X$ for   $X'=S_{n_\k}$, we see that $X'$ satisfies
(\ref{basber}). 
\end{remark}  

When $X$ has stronger integrability property, to be precise when $\E |X|^{2+\e}<\infty$   
for some positive $\e$,   we proved in \cite{GW3},
(Proposition 6) a
similar result:  \begin{eqnarray}\label{GW} & &\sqrt{nm} \,\Big|\P\{S_n=\k_n, S_m=\k_m\}-\P\{S_n=\k_n \}\P\{  S_m=\k_m\}
\Big| 
\cr 
 &\le &   C \Big({{1} \over {\sqrt{{n\over m}}-1}}+ \sqrt {n\over n-m}\ {{1} \over {(n-m)^\alpha}}\Big), \end{eqnarray}
with (here and below) $\a=\e/2$. Condition (\ref{basber}) was not needed.
 The second inequality of Theorem \ref{t1}  follows in that case directly from
(\ref{GW}). The proof uses crucially a local limit theorem with remainder term
 \begin{theorem}   {\rm (\cite{IL} Theorem 4.5.3)}    {\it  Let $F$ denote the distribution function of $X$. In order that  the property
\begin{equation}\label{r}   \sup_{N=an+dk}\Big|  \sqrt n \P\{S_n=N\}-{d\over  \sqrt{ 2\pi}\s}e^{- 
{(N-n\m )^2\over  2 n \s^2} }\Big| ={\mathcal O}\big(n^{-\alpha}\big) ,
\  0<\a<1/2 , 
 \end{equation}  holds, 
 it is necessary and sufficient that the following conditions be satisfied}:
 \begin{eqnarray*} 1) & & d=D,\cr 
2) & & \hbox{as $u\to \infty$,} \qq\int_{|x|\ge u} x^2 F(dx) = \mathcal O(u^{-2\a}) 
\end {eqnarray*}
\end{theorem}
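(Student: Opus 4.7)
The plan is to prove the two implications separately. Sufficiency reduces to a Fourier-inversion argument with a quantitative Taylor expansion of the characteristic function, powered by hypothesis 2); necessity proceeds by a Tauberian-type inversion of the LLT rate into tail control on $F$.

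\textbf{Sufficiency.} Let $\psi(t) = \E e^{it(X-\mu)}$. Fourier inversion for the lattice variable $S_n$ gives
\begin{equation*}
\P\{S_n = N\} = \frac{D}{2\pi}\int_{-\pi/D}^{\pi/D} e^{-it(N-n\mu)}\psi(t)^n\, dt,
\end{equation*}
and representing the Gaussian density as a full-line Fourier integral, subtracting, and substituting $t = s/\sqrt n$ reduces the estimate to bounding
\begin{equation*}
\int_{|s|\le \pi\sqrt n/D} \big|\psi(s/\sqrt n)^n - e^{-s^2\sigma^2/2}\big|\, ds
\end{equation*}
plus an $O(e^{-cn})$ Gaussian tail. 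I split at $|s|=\delta\sqrt n$. On the outer range, maximality of $D$ (condition~1) yields $\sup_{\delta\le|t|\le \pi/D}|\psi(t)|\le 1-\eta$, so the integrand is $\le e^{-\eta n}$. On the inner range, hypothesis~2) feeds into the elementary inequality $|e^{iu}-1-iu+u^2/2|\le \min(u^2,|u|^3/6)$: splitting the integral defining $\psi(t)-1+\sigma^2 t^2/2$ at $|y|=1/|t|$, the outer piece $t^2\int_{|y|\ge 1/|t|} y^2 dF(y)$ is $O(|t|^{2+2\alpha})$ directly by hypothesis, while one integration by parts against the truncated tail $H(u):=\int_{|y|\ge u} y^2 dF(y)$ gives $|t|^3\int_{|y|\le 1/|t|}|y|^3 dF(y) = O(|t|^{2+2\alpha})$ as well (using $\alpha<1/2$ crucially, so that $\int_0^{1/|t|} H(u)\, du$ converges at the right rate). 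This produces the expansion $\psi(t)=1-\sigma^2 t^2/2 + O(|t|^{2+2\alpha})$, and exponentiation yields $\psi(s/\sqrt n)^n = e^{-s^2\sigma^2/2}\bigl(1 + O(s^{2+2\alpha} n^{-\alpha} + s^4/n)\bigr)$ whose integral is the claimed $O(n^{-\alpha})$.

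\textbf{Necessity.} Condition~1) is forced already by the $o(1)$ LLT: any $d\ne D$ would place positive Gaussian mass on lattice points unreachable by $S_n$, forcing $\sqrt n\,|\P\{S_n=N\}-\text{Gauss}|$ to be of order $1$, not $o(1)$. For condition~2), the plan is to bootstrap the LLT rate into the local expansion $\psi(\xi)=1-\sigma^2 \xi^2/2 + O(|\xi|^{2+2\alpha})$ as $\xi\to 0$. Given small $\xi$, choose $n\asymp \xi^{-2}$ so that $t:=\xi\sqrt n$ has unit order; Poisson summation of $\sum_N e^{i\xi(N-n\mu)} g_n(N)$ against the Gaussian density gives the principal mode $e^{-t^2\sigma^2/2}+O(e^{-cn})$, while the pointwise LLT remainder, summed against a Chebyshev-controlled truncation of $\P\{S_n=N\}$, contributes $O(n^{-\alpha})$. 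Thus $\psi(\xi)^n = e^{-t^2\sigma^2/2} + O(n^{-\alpha})$, and taking logarithms yields the expansion. The Tauberian closing step is then short: writing $\sigma^2 t^2/2 - (1-\Re\psi(t)) = \int[y^2 t^2/2 - (1-\cos ty)]\,dF(y) = O(|t|^{2+2\alpha})$ with nonnegative integrand and using $y^2 t^2/2 - (1-\cos ty) \ge y^2 t^2/4$ for $|ty|\ge 2\sqrt 2$, one extracts $\int_{|y|\ge 2\sqrt 2/|t|} y^2\,dF(y) = O(|t|^{2\alpha})$, which upon $u=2\sqrt 2/|t|$ is condition~2).

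\textbf{Main obstacle.} The delicate step is the necessity passage from the pointwise LLT rate to the local expansion of $\psi$: the naive $\ell^1$ bound $\sum_N |R_n(N)| = O(\sqrt n)$ on the LLT remainder is too weak to yield $O(n^{-\alpha})$, so one must combine the pointwise rate with a separate quantitative tail bound on $\P\{S_n=N\}$ (via moment or functional-CLT arguments) before summing against the character $e^{i\xi(N-n\mu)}$. Once the expansion of $\psi$ is in hand, both the sufficiency Fourier computation and the Tauberian inversion are essentially routine.
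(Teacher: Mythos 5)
This statement is not proved in the paper at all: it is quoted verbatim from Ibragimov--Linnik (Theorem 4.5.3) and only serves to explain why the earlier method of \cite{GW3} needs a moment of order $2+\e$. So your proposal can only be measured against the classical proof. Your sufficiency half is essentially that classical argument and is sound: under 2) the splitting of $\psi(t)-1+\sigma^2t^2/2$ at $|y|=1/|t|$, with $\int_{|y|\le T}|y|^3\,dF=O(T^{1-2\alpha})$ obtained by integrating the tail function $H$, gives $\psi(t)=1-\sigma^2t^2/2+O(|t|^{2+2\alpha})$, and the standard inversion with the cut at $|s|=\delta\sqrt n$ (the outer range being exponentially small by maximality of the span) delivers the $O(n^{-\alpha})$ rate; the points where $\alpha<1/2$ and $d=D$ enter are exactly where you say they do.

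The necessity half, however, contains a genuine gap which your ``main obstacle'' paragraph names but does not remove, and the patch you propose cannot work as stated. To run your Tauberian step you need $1-\Re\psi(\xi)=\sigma^2\xi^2/2+O(|\xi|^{2+2\alpha})$, i.e.\ $\psi$-information with error $O(n^{-\alpha})$ at $\xi\asymp n^{-1/2}$. But when you write $\E e^{i\xi(S_n-n\mu)}=\sum_N e^{i\xi(N-n\mu)}\P\{S_n=N\}$ and replace $\P\{S_n=N\}$ by the Gaussian weight plus the LLT remainder, the remainder is only controlled pointwise, by $O(n^{-1/2-\alpha})$, while the character has modulus one; you must truncate the sum, and in the necessity direction the only tail bound available for $\P\{|S_n-n\mu|>x\sqrt n\}$ is Chebyshev's $O(x^{-2})$ (condition 2) is precisely what you are trying to prove, so no better tail estimate may be assumed). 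Balancing $x\sqrt n\cdot n^{-1/2-\alpha}$ against $x^{-2}$ gives an error $O(n^{-\alpha/2})$, not $O(n^{-\alpha})$; feeding this through your logarithm and Tauberian argument yields only $\int_{|y|\ge u}y^2\,dF=O(u^{-\alpha})$, half the claimed exponent (weighting by a decaying test function instead of the pure character runs into the same loss, or a logarithmic one, and no longer isolates $\Re\psi$). This quantitative loss is exactly why the necessity direction is delicate: in Ibragimov--Linnik it is not obtained by such a direct character-sum bootstrap but rests on the machinery developed for the integral CLT with rate (Ibragimov's theorem that $\sup_x|F_n(x)-\Phi(x)|=O(n^{-\alpha})$, $0<\alpha<1/2$, is equivalent to condition 2)), and your sketch gives no substitute for that step. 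As it stands, your argument proves sufficiency and the necessity of 1), but not the necessity of 2) with the stated exponent.
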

 When $\e=0$,   this   can obviously no longer be applied, and another approach has to be implemented. Notice that even
when
$\e>0$ our result is  stronger than (\ref{GW}).
\smallskip \par 
An application of Theorem \ref{t1} is given  in Section 4. We obtain an almost sure local limit theorem   for i.i.d. square
integrable   lattice distributed random variables.  
 
 \section{Preliminary Results} 
    Here we follow an important approach   due to MacDonald (\cite{M}, see also \cite{MD}). 
Let $0<\t< 1$ be fixed.   Put 
$$ f(k)= \P\{X= v_k\}, \qq k\in \Z .$$  We assume that there exists a sequence $\ut=\{ \tau_k, k\in \Z\}$ of   non-negative reals such that
$$  \tau_{k-1}+\tau_k\le 2f(k), \qq \forall k\in \Z,\qq\sum_{k\in \Z}  \tau_k =\t. $$ 
  If we choose $\t=\t_X=
\sum_{k\in \Z}  f(k)\wedge f(k+1)$, then this is realized with    $\tau_k= f(k)\wedge f(k+1)$. Notice that $\t_X<1$. Indeed,    
let  $k_0$ be some integer such that 
$f(k_0)>0$. Then
$$\sum_{k=k_0}^\infty  f(k)\wedge f(k+1)\le \sum_{k=k_0}^\infty  f(k+1)=\sum_{k=k_0+1}^\infty  f(k )
$$ 
And so  $ \t_X\le       \sum_{k< k_0 }  f(k) +\sum_{k=k_0+1}^\infty  f(k )<1$.     
\smallskip\par
Notice also that $\t_X>0 $. This follows from assumption
(\ref{basber}), and  is further necessary in order to make this approach efficient.
   MacDonald's construction applies to
the slightly more general  case we consider, and is even  easier to present.
 We define a pair of random variables $(V,\e)$   as follows.
For
$k\in\Z$, 
\begin{eqnarray}\label{ve}  \P\{ (V,\e)=( v_k,1)\}&=&\tau_k,      \cr &\cr
 \P\{ (V,\e)=( v_k,0)\}&=&f(k) -{\tau_{k-1}+\tau_k\over
2}    .  
\end{eqnarray}
This is well defined by assumption.    Observe that
$$\sum_{k\in\Z}  \Big[\P\{ (V,\e)=( v_k,1)\}+\P\{ (V,\e)=( v_k,0)\}\Big] =\sum_{k\in\Z}  f(k)+ {1\over
2}\sum_{k\in\Z}  \big[\tau_{k }-\tau_{k-1}\big]  =1. $$
     \begin{lemma}  We  have  for
$k\in\Z$
$$\P\{ V=v_k\} = f(k)+ {\tau_{k }-\tau_{k-1}\over 2} ,$$ 
and 
 $\P\{ \e=1\}=1-\P\{ \e=0\}  =\t$.  \end{lemma}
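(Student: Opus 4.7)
The plan is to verify both identities by direct computation from the joint law of $(V,\varepsilon)$ specified in (\ref{ve}), using the already-noted fact that these joint probabilities sum to $1$.

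For the first identity, I would obtain the marginal law of $V$ at $v_k$ by summing over the two possible values of $\varepsilon$:
\[
\P\{V=v_k\}=\P\{(V,\varepsilon)=(v_k,1)\}+\P\{(V,\varepsilon)=(v_k,0)\}
=\tau_k+\Big(f(k)-\frac{\tau_{k-1}+\tau_k}{2}\Big),
\]
which simplifies at once to $f(k)+\frac{\tau_k-\tau_{k-1}}{2}$, as claimed. The only point to check is that each joint probability is genuinely nonnegative so that they are legitimate marginals, but this is exactly the content of the hypothesis $\tau_{k-1}+\tau_k\le 2f(k)$ together with $\tau_k\ge 0$.

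For the second identity, I would compute the marginal of $\varepsilon$ at $1$ by summing the first line of (\ref{ve}) over $k\in\Z$:
\[
\P\{\varepsilon=1\}=\sum_{k\in\Z}\P\{(V,\varepsilon)=(v_k,1)\}=\sum_{k\in\Z}\tau_k=\vartheta,
\]
by the defining property of the sequence $\underline{\tau}$. Finally, $\P\{\varepsilon=0\}=1-\P\{\varepsilon=1\}=1-\vartheta$ follows because the total mass of $(V,\varepsilon)$ equals $1$, which was precisely the computation displayed just before the statement of the lemma (where the telescoping sum $\sum_k(\tau_k-\tau_{k-1})$ vanishes since $\tau_k\to 0$ as $|k|\to\infty$, a consequence of $\sum_k\tau_k=\vartheta<\infty$).

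There is no real obstacle here: the lemma is a bookkeeping step extracting the marginal distributions of $V$ and $\varepsilon$ from the joint law (\ref{ve}). The only thing that requires a brief comment is the vanishing of the telescoping tail in the computation $\sum_k(\tau_k-\tau_{k-1})=0$, which one justifies from summability of $\underline{\tau}$.
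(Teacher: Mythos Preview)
Your proof is correct and follows exactly the same route as the paper: compute $\P\{V=v_k\}$ by summing the two joint probabilities in (\ref{ve}), and compute $\P\{\varepsilon=1\}$ by summing $\tau_k$ over $k$. Your additional remarks on nonnegativity and on the telescoping tail are sound but go slightly beyond what the paper records, which dispatches the lemma in two lines.
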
 
\begin{proof} Plainly,
\begin{eqnarray*}\P\{ V=v_k\}& = & \P\{ (V,\e)=( v_k,1)\}+ \P\{ (V,\e)=( v_k,0)\}\cr
&=&f(k)+ {1\over 2}\big[\tau_{k }-\tau_{k-1}\big].
\end{eqnarray*} 
Further  $\P\{ \e=1\}  =\sum_{k\in\Z} \P\{ (V,\e)=( v_k,1)\}=\sum_{k\in\Z} \tau_{k }=\t  $. 
\end{proof} 
 \begin{lemma} Let $L$ be a Bernoulli random variable ($\P\{L=0\}=\P\{L=1\}=1/2$)  which is independent
from $(V,\e)$, and put
 $Z= V+ \e DL$. 
We have $Z\buildrel{\mathcal D}\over{ =}X$. 
\end{lemma}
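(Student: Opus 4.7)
The plan is a direct computation of $\P\{Z=v_k\}$ by conditioning on the pair $(\varepsilon,L)$, exploiting the independence of $L$ from $(V,\varepsilon)$.

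First I note that $Z$ takes values in $\mathcal{L}(v_0,D)$, since $V\in\mathcal{L}(v_0,D)$ and $\varepsilon DL\in\{0,D\}$. So it suffices to show $\P\{Z=v_k\}=f(k)$ for every $k\in\Z$.

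Next I would partition the event $\{Z=v_k\}$ according to the values of $\varepsilon$ and $L$. There are three contributing cases:
\begin{itemize}
\item $\varepsilon=0$: then $Z=V$, so we need $V=v_k$, contributing $\P\{(V,\varepsilon)=(v_k,0)\}=f(k)-\tfrac{\tau_{k-1}+\tau_k}{2}$.
\item $\varepsilon=1$ and $L=0$: then $Z=V$, so we need $V=v_k$, contributing $\tfrac{1}{2}\P\{(V,\varepsilon)=(v_k,1)\}=\tfrac{1}{2}\tau_k$ by independence of $L$.
\item $\varepsilon=1$ and $L=1$: then $Z=V+D$, so we need $V=v_{k-1}$, contributing $\tfrac{1}{2}\P\{(V,\varepsilon)=(v_{k-1},1)\}=\tfrac{1}{2}\tau_{k-1}$.
\end{itemize}
Summing the three contributions gives
$$\P\{Z=v_k\}=\Big(f(k)-\frac{\tau_{k-1}+\tau_k}{2}\Big)+\frac{\tau_k}{2}+\frac{\tau_{k-1}}{2}=f(k),$$
which is exactly $\P\{X=v_k\}$, proving $Z\buildrel{\mathcal D}\over{=}X$.

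Since the argument is bookkeeping combined with the independence of $L$ from $(V,\varepsilon)$, there is no substantive obstacle. The only point requiring care is making sure that the three cases exhaust the event $\{Z=v_k\}$ and are disjoint, which is immediate from the definition of $Z$ and the binary nature of $\varepsilon$ and $L$. The cancellation of the $\pm(\tau_{k-1}+\tau_k)/2$ terms is the final mechanical step.
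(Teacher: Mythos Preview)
Your proof is correct and follows essentially the same route as the paper: both compute $\P\{Z=v_k\}$ by splitting on the values of $\varepsilon$ (and then $L$), using the independence of $L$ from $(V,\varepsilon)$ and the defining formulas for $\P\{(V,\varepsilon)=(v_j,\cdot)\}$, and observe the cancellation of the $\tfrac{\tau_{k-1}+\tau_k}{2}$ terms.
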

\begin{proof} Indeed,
\begin{eqnarray*}\P\{Z=v_k\}&=&\P\big\{ V+\e DL=v_k, \e=1\}+ \P\big\{ V+\e DL=v_k, \e=0\} \cr 
&=&{\P\{ V=v_{k-1}, \e=1\}+\P\{
V=v_k, \e=1\}\over 2} +\P\{ V=v_k, \e=0\}
\cr&=& {\tau_{k-1}+ \tau_{k }\over 2} +f(k)-{\tau_{k-1}+ \tau_{k
}\over 2}   
\cr&=& f(k).
\end{eqnarray*}
\end{proof} 
    Now let $\{X_j, j\ge 1\}$ be independent copies of $X$. According to the previous construction, we
may  
 associate to them a  
sequence $\{(V_j,\e_j, L_j), j\ge 1\}$ of independent copies of  $(V ,\e , L )$  such that 
 $$ \{V_j+\e_jD L_j, j\ge 1\}\buildrel{\mathcal D}\over{ =}\{X_j, j\ge 1\}  . $$
Further $\{(V_j,\e_j), j\ge 1\}  
 $   and $\{L_j, j\ge 1\}$ are independent sequences. 
 And $\{L_j, j\ge 1\}$ is  a sequence  of independent Bernoulli random variables. Set 
\begin{equation}\label{dec} S_n =\sum_{j=1}^n X_j, \qq  W_n =\sum_{j=1}^n V_j,\qq M_n=\sum_{j=1}^n \e_jL_j,  \qq B_n=\sum_{j=1}^n \e_j .
\end{equation}

We   notice that $M_n$ is   a sum of exactly $B_n$ Bernoulli random variables. The Lemma below is now immediate.
 \begin{lemma} \label{lemd}We have the
representation  
$$ \{S_n, n\ge 1\}\buildrel{\mathcal D}\over{ =}  \{ W_n  + DM_n, n\ge 1\} .$$
And  $M_n\buildrel{\mathcal D}\over{ =}\sum_{j=1}^{B_n } L_j$. 
 \end{lemma}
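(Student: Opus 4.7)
The plan is to deduce both identities directly from the construction preceding the statement, using only independence and the single–variable equality $V+\varepsilon DL\buildrel{\mathcal D}\over{=}X$ established in the previous lemma.

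For the first assertion, I would note that the triples $\{(V_j,\varepsilon_j,L_j),\ j\ge 1\}$ are, by construction, independent copies of $(V,\varepsilon,L)$. Hence the sequence $\{V_j+\varepsilon_j DL_j,\ j\ge 1\}$ is a sequence of independent random variables with common distribution equal to that of $X$, i.e.\ as a process it has the same finite-dimensional distributions as $\{X_j,\ j\ge 1\}$. Applying the measurable map $(x_1,x_2,\ldots)\mapsto (x_1,x_1+x_2,\ldots)$ preserves equality in distribution at the process level, which yields
\[
\{S_n,\ n\ge 1\}\buildrel{\mathcal D}\over{=}\Big\{\sum_{j=1}^n (V_j+\varepsilon_j DL_j),\ n\ge 1\Big\}=\{W_n+DM_n,\ n\ge 1\}.
\]

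For the second assertion, I would argue conditionally on $(\varepsilon_j)_{j\ge 1}$. Since $L_j\in\{0,1\}$, we have $\varepsilon_j L_j=L_j$ whenever $\varepsilon_j=1$ and $\varepsilon_j L_j=0$ otherwise; therefore
\[
M_n=\sum_{j=1}^n \varepsilon_j L_j=\sum_{j:\ \varepsilon_j=1,\ j\le n} L_j,
\]
a sum of exactly $B_n=\sum_{j\le n}\varepsilon_j$ terms. Because the family $\{L_j\}$ is independent of $\{(V_j,\varepsilon_j)\}$ and consists of i.i.d.\ Bernoulli$(1/2)$ variables, relabelling the indices appearing in the last sum produces, conditionally on $B_n$, the same law as $\sum_{j=1}^{B_n}L_j$. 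Integrating over $B_n$ yields the stated distributional identity $M_n\buildrel{\mathcal D}\over{=}\sum_{j=1}^{B_n}L_j$.

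There is no genuine obstacle here; the only point requiring a minimum of care is to make sure the equality in distribution is asserted at the level of the whole process in $n$, not merely one-dimensionally, so that the representation can later be exploited jointly at the two scales $m$ and $n$ needed in Theorem~\ref{t1}. This follows automatically from the fact that the bijection $(x_j)\mapsto (\sum_{j\le n} x_j)_n$ is measurable and applied to two sequences with the same joint law.
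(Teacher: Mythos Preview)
Your proof is correct and follows essentially the same approach as the paper, which simply declares the lemma ``immediate'' after observing that $M_n$ is a sum of exactly $B_n$ independent Bernoulli random variables. You have merely written out the details that the paper leaves implicit, including the care taken to assert the equality at the process level.
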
 

We need an extra lemma.
 \begin{lemma} \label{lemk} Let $0<\theta\le \t$. For any positive integer $n$, we have $$\P\{B_n\le \theta n\}\le \Big( {1-\t\over
1-\theta}\Big)^{n(1-\theta)}\Big( {
\t\over
\theta}\Big)^{n \theta}.$$ Let $1-\t<\rho<1$. There exists  
$ 0< \theta <\t$,  $\theta=\theta(\rho,\t)$ such   that  for any positive integer $n$
 \begin{eqnarray*}  \P\{B_n\le \theta n\}\le \rho^n.
\end{eqnarray*} \end{lemma}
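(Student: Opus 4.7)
The plan is direct. By the previous lemma $B_n=\sum_{j=1}^n\e_j$ is a sum of $n$ iid Bernoulli$(\t)$ random variables, and the condition $\theta\le\t$ places us in the lower-tail large-deviation regime; the first inequality is simply the classical Chernoff bound for the Bernoulli law written in exponentiated form, and the second will be extracted from it by a continuity argument.

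For the first inequality I would apply the exponential Markov inequality: for any $t\ge 0$,
\[
\P\{B_n\le \theta n\}\ =\ \P\{e^{-tB_n}\ge e^{-t\theta n}\}\ \le\ e^{t\theta n}\bigl(1-\t+\t e^{-t}\bigr)^n,
\]
and then optimize over $t$. A short calculus exercise gives the minimizer $e^{-t}=\theta(1-\t)/[\t(1-\theta)]$, which lies in $(0,1]$ precisely because $\theta\le\t$; substituting this back and collecting exponents recovers exactly the stated bound $\bigl((1-\t)/(1-\theta)\bigr)^{n(1-\theta)}(\t/\theta)^{n\theta}$. This step is routine algebra.

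For the second inequality I would introduce
\[
\phi(\theta)\ =\ \Bigl(\frac{1-\t}{1-\theta}\Bigr)^{1-\theta}\Bigl(\frac{\t}{\theta}\Bigr)^{\theta},\qquad 0<\theta\le\t,
\]
so that the first part reads $\P\{B_n\le\theta n\}\le\phi(\theta)^n$, and the task reduces to producing some $\theta\in(0,\t)$ with $\phi(\theta)\le\rho$. The function $\phi$ is continuous on $(0,\t]$, satisfies $\phi(\t)=1$, and has boundary value $\lim_{\theta\to 0^+}\phi(\theta)=1-\t$, since $\theta\log(\t/\theta)\to 0$ as $\theta\to 0^+$. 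Because the hypothesis $1-\t<\rho$ places $\rho$ strictly above this limit, the intermediate value theorem immediately yields the desired $\theta=\theta(\rho,\t)\in(0,\t)$.

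The only genuinely new step is identifying the boundary value $\phi(0^+)=1-\t$: this is precisely what matches the range of admissible $\rho$ to the hypothesis $1-\t<\rho<1$ in the statement. The Chernoff optimization and the intermediate value application are entirely mechanical once this boundary behaviour is in hand, so I do not expect any real obstacle.
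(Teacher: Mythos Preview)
Your proof is correct and follows essentially the same route as the paper: exponential Markov inequality, optimization in the exponential parameter leading to the same minimizer, and then analysis of the resulting function $\phi(\theta)=\bigl(\frac{1-\t}{1-\theta}\bigr)^{1-\theta}\bigl(\frac{\t}{\theta}\bigr)^{\theta}$ with the boundary values $\phi(\t)=1$ and $\phi(0^+)=1-\t$. The only cosmetic difference is that the paper also checks that $\phi$ is nondecreasing on $(0,\t]$ (via $(\log\phi)'(\theta)=\log\bigl(\frac{\t}{1-\t}\big/\frac{\theta}{1-\theta}\bigr)\ge 0$) and hence solves $\phi(\theta)=\rho$ exactly, whereas you appeal only to continuity and the intermediate value theorem; either argument suffices.
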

\begin{proof}   By Tchebycheff's inequality, for any $\l \ge 0$,
 \begin{eqnarray*}\P\{B_n\le \theta n\}&=&\P\{e^{-\l B_n} \ge e^{-\l\theta n}\}\le e^{-\l\theta n}\E e^{ \l B_n}=\big(e^{ \l\theta  }\E e^{
 \l
 \e}\big)^n \cr &=& \Big(e^{ \l\theta  }\big[1-\t(1-e^{-\l})\big] \Big)^n.
 \end{eqnarray*}
 Put  $x=e^\l  $, ($x\ge 1$) and let $\p(x)=x^{ \theta  }\big[1-\t(1-x^{-1})\big]$. Then $\P\{B_n\le \theta n\}\le \p(x)^n $. We have
 $\p'(x)=x^{\theta-2}(x\theta(1-\t)-(1-\theta)\t)$. Thus $\p$ reaches its minimum   at the value  $x_0={(1-\theta)\t\over \theta(1-\t)}.$
 And we have  $ \p(x_0)=\psi(\theta)$, where we put
 $$ \psi(\theta)=\Big( {1-\t\over 1-\theta}\Big)^{1-\theta}\Big( { \t\over \theta}\Big)^{ \theta}, \qq 0<\theta\le \t.$$
  We note that $\psi(\t )=1$, $\lim_{\theta\to 0+} \psi(\theta)= 1-\t $ and $\psi$  is nondecreasing ($(\log \psi)'(\theta)=\log\big( {
 \t\over 1-\t }\big/{ \theta \over 1-\theta }\big)\ge 0$, $0<\theta\le \t$). 
  Let $1-\t<\rho<1$. We
 may   select  
 $ 0< \theta_{\rho,\t}<\t$ depending on $\rho,\t$ only such   that $\psi(\theta)=\rho$. This yields
   the bound
 \begin{eqnarray}\P\{B_n\le \theta n\}\le \rho^n.
 \end{eqnarray}
 \end{proof}  
  
\medskip\par 
We choose 
$$\rho=  1-(\t/2) ,$$
 and let $0< \theta <\t$ such that in view of the preceding Lemma $\P\{B_n\le \theta n\}\le \rho^n$ and
$\P \{B_n-B_m\le  \theta (n-m)\}\le \rho^{n-m} $ for all integers $n>m\ge 1$. 
 \section{Proof of Theorem \ref{t1}}   
     Put  
\begin{equation}\label{Y}Y_n= \sqrt n \big({\bf 1}_{\{S_n=\kappa_n\}}-\P\{S_n=\kappa_n\} \big)  .
\end{equation}
 We have to establish that there  a constant
$C $  such that for all $1\le m<n$
\begin{eqnarray} \label{C1}\big|\E   Y_nY_m\big|  
 &\le &  C  \Big\{     {   1\over 
 \sqrt{n\over m}-1   } +      {n^{1/2} \over
 (n-m) ^{3/2}} \Big\} .
\end{eqnarray}   And   given  $0<c<1$, that there exists a constant
$C_{  c} $  such that for all $1\le m\le cn$,
\begin{eqnarray}\label{C2} \big|\E   Y_nY_m\big|  
 &\le &  C_{c} \, \sqrt{{m\over n}}. \end{eqnarray}
     We denote by $\E_{(V,\e)}$, $\P_{(V,\e)}$ (resp. $\E_{\!L}$, $\P_{\!L}$) the expectation and probability symbols
relatively to the
$\s$-algebra generated by the sequence   $\{(V_j,\e_j), j=1, \ldots, n\}$ (resp. $\{L_j , j=1, \ldots, n\}$). We know that these
 algebra are independent. 
Let $n>m\ge 1$.    Then
  \begin{eqnarray*}\label{cov}{\E Y_nY_m\over \sqrt{ n m}} &= & 
 \P
\{S_n =\kappa_n, S_m =\kappa_m \}-\P \{S_n =\kappa_n \}\P \{S_m =\kappa_m \}  \cr &=& \P \{S_m =\kappa_m\}\P\{ S_n-S_m
=\kappa_n -\kappa_m\}-\P
\{S_n =\kappa_n \}\P \{S_m =\kappa_m \} 
\cr &= &  \P \{S_m =\kappa_m\}\big(\P\{ S_{n-m} =\kappa_n -\kappa_m\}- \P \{S_n =\kappa_n \}\big),
\end{eqnarray*}
we get  for $n>m$
\begin{equation}\label{basic}\E Y_nY_m=\sqrt{   m}\P \{S_m =\kappa_m\}\,\sqrt{ n  }\big(\P\{ S_{n-m} =\kappa_n -\kappa_m\}- \P \{S_n
=\kappa_n
\}\big).
\end{equation}
 Further    when $n=m$, by (\ref{llt})
\begin{equation}\label{basic0}\E Y_n^2=n \P\{S_n=\kappa_n\}\big(1-\P\{S_n=\kappa_n\} \big) ={\mathcal O}(\sqrt n). 
\end{equation} 

 Now \begin{eqnarray*}
A &:=& \sqrt{ n  }\Big(\P\{ S_{n }-S_{ m} =\kappa_n -\kappa_m\}- \P \{S_n =\kappa_n \}\Big)
\cr 
& =& \sqrt{ n  }\E \big({\bf 1}_{\{B_n\le n\theta\}}+{\bf 1}_{\{B_n> n\theta\}}\big)\Big({\bf 1}_{\{ S_{n }-S_{ m} =\kappa_n -\kappa_m\}}-
{\bf 1}_{
\{S_n =\kappa_n \}}\Big)
\end{eqnarray*} 
By Lemma \ref{lemk}
\begin{eqnarray*} \sqrt{ n  }\, \E  {\bf 1}_{\{B_n\le n\theta\}} \big|{\bf 1}_{\{ S_{n }-S_{ m} =\kappa_n -\kappa_m\}}-
{\bf 1}_{
\{S_n =\kappa_n \}}\big|&\le& \sqrt {n}\, \rho^n. 
\end{eqnarray*}
Thus 
\begin{eqnarray} \label{e0}\Big|A-\sqrt{ n  }\,\E  {\bf 1}_{\{B_n> n\theta\}} \big({\bf 1}_{\{ S_{n }-S_{ m} =\kappa_n
-\kappa_m\}}- {\bf 1}_{
\{S_n =\kappa_n \}}\big)\Big|\le \sqrt n\, \rho^n.
\end{eqnarray} 
We can write in view of Lemma \ref{lemd}
\begin{eqnarray}\label{dep}& &\sqrt{ n  }\,\E  {\bf 1}_{\{B_n> n\theta\}} \Big({\bf 1}_{\{ S_{n }-S_{ m} =\kappa_n
-\kappa_m\}}- {\bf 1}_{
\{S_n =\kappa_n \}}\Big)\cr&= &\sqrt{ n 
}\E_{(V,\e)} {\bf 1}_{\{B_n> n\theta\}} \Big(\P_{\!L}
\Big\{D  \sum_{j=m+1}^n \e_jL_j =\kappa_n-\kappa_m-(W_{n  }-W_m)\Big\}
\cr & &\  -  \P_{\!L}
\big\{D \sum_{j= 1}^n \e_jL_j  =\kappa_n-W_n
\big\}\Big) 
\end{eqnarray}
Observe that if $B_n=B_m$, then $\sum_{j= 1}^n \e_jL_j =\sum_{j= 1}^m \e_jL_j $. Thus 
$$\Big\{D  \sum_{j=m+1}^n \e_jL_j =\kappa_n-\kappa_m-(W_{n  }-W_m)\Big\}=\Big\{W_{n 
}-W_m =\kappa_n-\kappa_m \Big\}.$$ 
So that (\ref{dep}) may be continued with 
\begin{eqnarray}\label{dep1}  &= & \sqrt{ n 
}\Big\{\E_{(V,\e)} {\bf 1}_{\{B_n> n\theta, B_n=B_m\}}\big({\bf 1}_{\{  W_{n  }-W_m=\kappa_n-\kappa_m   \} }
\cr & &\    - \P_{\!L}
\big\{D \sum_{j= 1}^n \e_jL_j  =\kappa_n-W_n
\big\}\big) \Big\}
\cr 
&  &+\sqrt{ n 
}\Big\{\E_{(V,\e)} {\bf 1}_{\{B_n> n\theta, B_n>B_m\}} \Big[\P_{\!L}
\Big\{D  \sum_{j=m+1}^n \e_jL_j =\kappa_n-\kappa_m-
\cr & &\ (W_{n  }-W_m)\Big\}
   -  \P_{\!L}
\big\{D \sum_{j= 1}^n \e_jL_j  =\kappa_n-W_n
\big\}\Big] \Big\}
\cr &:=& A'+ A''.
\end{eqnarray} 
    We bound $A'$ as follows
\begin{equation}\label{e1}|A'| \le \sqrt{ n 
}  \P 
\big\{  B_n=B_m\}  =\sqrt n\, 2^{-(n-m)}.
 \end{equation}
Now concerning $A''$, we have $$\sum_{j= 1}^n \e_jL_j\buildrel{\mathcal D}\over{ =}\sum_{j=1}^{B_n } L_j \qq \sum_{j=m+1}^n \e_jL_j
 \buildrel{\mathcal D}\over{ =}\sum_{j=B_m+1}^{B_n } L_j.$$
Now we need a local limit theorem for Bernoulli
sums.  By applying Theorem 13 in Chapter 7 of \cite{[P]}, we obtain   
 \begin{eqnarray}\label{lltber} \sup_{z}\, \Big|\sqrt N\, \P\big\{\sum_{j=1}^{N } L_j=z\} \big\} -{2\over \sqrt{2\pi}}e^{-{(z-(N/2))^2\over (N/2)}}\Big| =o\Big({1\over N}\Big). 
 \end{eqnarray} Therefore 
$$ \Big|   \P_{\!L} \big\{D\sum_{j=1}^{B_n } L_j =\kappa_n-W_n \big\} -{2e^{-{(\kappa_n-W_n-(B_n/2))^2\over D^2(B_n/2)}}\over
\sqrt{2\pi B_n}} \Big| =o\Big({1\over B_n^{3/2}}\Big). $$
And  on the set $\{ B_n>B_m\}$
$$ \Big|   \, \P_{\!L} \big\{D\sum_{j=1}^{B_{n }-B_{ m} } L_j =\kappa_n-\kappa_m-(W_{n  }-W_m) \big\}
\qq \qq\qq\qq$$ $$ \qq-{2e^{-{(\kappa_n-\kappa_m-(W_{n  }-W_m)-(B_{n }-B_{ m})/2 )^2\over D^2(B_{n }-B_{ m})/2 }}\over
\sqrt{2\pi (B_{n }-B_{ m})} } \Big| =o\Big({1\over
(B_{n }-B_{ m})^{3/2}}\Big). $$ 
 It follows that 
 \begin{eqnarray}\label{e2} |A''| &\le & \sqrt{ n }\ \bigg|\E_{(V,\e)}{\bf 1}_{\{B_n> n\theta, B_n>B_m\}}
\Big\{{2e^{-{(\kappa_n-W_n-(B_n/2))^2\over D^2(B_n/2)}}\over
\sqrt{2\pi B_n}} 
\cr 
& &\quad- {2e^{-{(\kappa_n-\kappa_m-(W_{n  }-W_m)-(B_{n }-B_{ m})/2 )^2\over D^2(B_{n }-B_{ m})/2 }}\over
\sqrt{2\pi (B_{n }-B_{ m})} }\Big\}\bigg|
\cr 
& &\quad+C_0\sqrt{ n }\, \E_{(V,\e)}{\bf 1}_{\{B_n> n\theta, B_n>B_m\}}\Big\{ {1\over B_{n }^{3/2}}+ \Big({1\over
(B_{n }-B_{ m})^{3/2}}\Big)\Big\}\cr 
&:=& A''_1+C_0A''_2.
\end{eqnarray}
And the constant $C_0$ comes from the Landau symbol $o$ in (\ref{lltber}). The second term is easily estimated. Indeed, 
\begin{eqnarray}\label{e6}A''_2&= &\sqrt n \,\E_{(V,\e)}{\bf 1}_{\{B_n> n\theta, B_n>B_m\}}
 \Big( {1\over B_{n }^{3/2}}+{1\over (B_{n }-B_{ m})^{3/2}}\Big) 
\cr &\le&
 2\sqrt n\, \P\{B_n-B_m\le (n-m)\theta\}\cr 
& & + \sqrt n\, \E_{(V,\e)}{\bf 1}_{\{  B_n-B_m> (n-m)\theta\}} \Big( {1\over (n\theta)^{3/2}}+{1\over (B_{n }-B_{ m})^{3/2}}\Big)
\cr &\le&
 C\, \sqrt n \Big\{ \rho^{  n-m }  +  {1\over (n\theta)^{3/2}}+ {1\over
((n-m)\theta)^{3/2}}\Big\}.
\end{eqnarray}

We now   estimate $A''_1$, which we    bound   as follows:
\begin{eqnarray}\label{e3}A_1''
 &\le& C \sqrt n\ \E_{(V,\e)}{\bf 1}_{\{B_n> n\theta, B_n>B_m\}} \bigg\{ \Big| {
e^{-{(\kappa_n-W_n-(B_n/2))^2\over D^2(B_n/2)}}\over
\sqrt{  B_n}}   
 \cr & &\  -
 { e^{-{(\kappa_n-\kappa_m-(W_{n  }-W_m)-(B_{n }-B_{ m})/2 )^2\over D^2(B_{n }-B_{ m})/2 }}\over
\sqrt{  (B_{n }-B_{ m})} }\Big|\bigg\} 
\cr &\le&  C  \ \E_{(V,\e)}{\bf 1}_{\{B_n> n\theta, B_n>B_m\}} \Big\{\Big({n\over B_n}\Big)^{1/2}\Big[   
  \sqrt{  {B_n} \over
   B_{n }-B_{ m}   }-1 \Big] 
\cr & &\ \times e^{-{(\kappa_n-\kappa_m-(W_{n  }-W_m)-(B_{n }-B_{ m})/2 )^2\over D^2(B_{n }-B_{ m})/2 }}\Big\}
\cr & &\ + C  \ \E_{(V,\e)}{\bf 1}_{\{B_n> n\theta, B_n>B_m\}}\Big\{\Big({n\over B_n}\Big)^{1/2}\Big|   e^{-{(\kappa_n-W_n-(B_n/2))^2\over
D^2(B_n/2)}} 
\cr & &\ -
   e^{-{(\kappa_n-\kappa_m-(W_{n  }-W_m)-(B_{n }-B_{ m})/2 )^2\over D^2(B_{n }-B_{ m})/2 }}\Big|
 \cr &\le & C_\theta \bigg\{  \ \E_{(V,\e)}{\bf 1}_{\{B_n> n\theta, B_n>B_m\}} \Big[   
  \sqrt{  {B_n} \over
   B_{n }-B_{ m}   }-1 \Big]  
 \cr & &\ +   \ \E_{(V,\e)}{\bf 1}_{\{B_n> n\theta, B_n>B_m\}} \Big|   e^{-{(\kappa_n-W_n-(B_n/2))^2\over
D^2(B_n/2)}} 
\cr & &\ -
   e^{-{(\kappa_n-\kappa_m-(W_{n  }-W_m)-(B_{n }-B_{ m})/2 )^2\over D^2(B_{n }-B_{ m})/2 }}\Big| \bigg\}
\cr &:=  &C_\theta \big\{A''_{11}+A''_{12}  \big\}.
\end{eqnarray}



 In the one hand, on the set $\{ B_n>B_m\}$
  \begin{eqnarray*}   \sqrt{  {B_n} \over
   B_{n }-B_{ m}   }-1 &=&  {\sqrt{B_n} -\sqrt{B_{n }-B_{ m}}\over \sqrt{B_{n }-B_{ m}}}= {\sqrt{B_m}\over \sqrt{B_{n }-B_{ m}}}
{\sqrt{B_m}\over \sqrt {B_n}+\sqrt{B_{n }-B_{ m}}}\cr 
   &\le &   {\sqrt{B_m}\over \sqrt{B_{n }-B_{ m}}}
.
   \end{eqnarray*} 
Thus 
\begin{eqnarray}\label{e4} A''_{11}&= &\E_{(V,\e)}{\bf 1}_{\{B_n> n\theta, B_n>B_m\}} \Big[   
  \sqrt{  {B_n} \over
   B_{n }-B_{ m}   }-1 \Big]
\cr  &\le &\E_{(V,\e)}{\bf 1}_{\{B_n> n\theta, B_n>B_m\}}  {\sqrt{B_m}\over \sqrt{B_{n }-B_{ m}}} 
 \cr  &\le &   \E_{(V,\e)}{\bf 1}_{\{ 
B_n>B_m\}}  {\sqrt{B_m}\over \sqrt{B_{n }-B_{ m}}}
\cr  &= &  \E_{(V,\e)}\Big({\bf 1}_{\{  B_n-B_m\le (n-m)\theta\}}
\cr  &   &\ +{\bf 1}_{\{  B_n-B_m>(n-m)\theta\}}\Big) {\bf 1}_{\{ 
B_n>B_m\}}  {\sqrt{B_m}\over
\sqrt{B_{n }-B_{ m}}}   
\cr  &\le & C_\theta \P \{  B_n-B_m\le (n-m)\theta\} +  {C_\theta\over
\sqrt{(n-m)\theta}}\E_{(V,\e)} 
 \sqrt{B_m} 
\cr  &\le & C_\theta \Big\{\rho^{n-m}  +   {\sqrt m\over
\sqrt{(n-m) }} \Big\}=C_\theta \Big\{\rho^{n-m}  +   {1\over
\sqrt{{n\over m}-1 }} \Big\}
\cr  &\le &  C_\theta \Big\{\rho^{n-m}  +   {1\over
\sqrt{{n\over m}}-1 } \Big\},  
\end{eqnarray} 
since $\sqrt x-\sqrt y\le \sqrt{x-y}$ if $x\ge y\ge 0$. 
\medskip\par

   Now  we turn to  $A''_{12}$. Put $\kappa_n'=\kappa_n-W_n-(B_n/2)$. 
Then
\begin{eqnarray*}A''_{12}&=& \E_{(V,\e)}{\bf 1}_{\{B_n> n\theta,  B_n>B_m\}} \Big|   e^{-{(\kappa_n-W_n-(B_n/2))^2\over
D^2(B_n/2)}}
\cr & &  \qq\qq\qq\qq\qq\qq-
   e^{-{(\kappa_n-\kappa_m-(W_{n  }-W_m)-(B_{n }-B_{ m})/2 )^2\over D^2(B_{n }-B_{ m})/2 }}\Big|
\cr &=&\E_{(V,\e)}{\bf 1}_{\{B_n> n\theta,  B_n>B_m\}} \Big|   e^{-{{\kappa'_n}^2\over
D^2(B_n/2)}}
  -
   e^{-{(\kappa'_n-\kappa'_m )^2\over D^2(B_{n }-B_{ m})/2 }}\Big| .
\end{eqnarray*}
We have 
\begin{eqnarray}\label{e5} & &\E_{(V,\e)}{\bf 1}_{\{B_n> n\theta,  0<B_n-B_m\le \theta (n-m)\}} \Big|   e^{-{{\kappa'_n} ^2\over
D^2(B_n/2)}}
  -
   e^{-{(\kappa'_n-\kappa'_m )^2\over D^2(B_{n }-B_{ m})/2 }}\Big| \cr 
&\le& 2\P\{B_n-B_m\le \theta (n-m) \} \le 2\rho^{n-m}.
  \end{eqnarray}
It remains to bound 
$$\E_{(V,\e)}{\bf 1}_{\{B_n> n\theta,   B_n-B_m> \theta (n-m)\}} \Big|   e^{-{{\kappa'_n}^2\over
D^2(B_n/2)}}
  -
   e^{-{(\kappa'_n-\kappa'_m )^2\over D^2(B_{n }-B_{ m})/2 }}\Big|.$$
Let
$$b_n ={ \kappa_n' \over
 \sqrt B_n }     . $$
 By using the inequality $|e^{-u }-e^{-v }|\le
|u-v|$  valid for all reals $u\ge 0,v\ge 0$, we have 
  \begin{eqnarray}    
 & &{   D^2\over 2}\, \Big|   e^{-{{\kappa'_n}^2\over
D^2(B_n/2)}}
  -
   e^{-{(\kappa'_n-\kappa'_m )^2\over D^2(B_{n}-B_{m})/2 }}\Big|
\cr & \le &  \Big| 
-{(\kappa'_n-\kappa'_m )^2\over  (B_{n }-B_{ m})  }
 + {{\kappa'_n}^2\over
 B_n } \Big|= \Big|  -{ (\sqrt{
B_{n}}b_n-
\sqrt{ B_{m}}b_m)^2  \over B_{n}-B_{m} } + b_n^2 \Big|\cr 
 & = &  \Big|   {  -B_{n}b_n^2-B_{m}b_m^2+2\sqrt{B_{n}B_{m}}b_nb_m  \over B_{n}-B_{m} } + b_n^2 \Big|\cr 
 & = &  \Big|   {  -B_{n}b_n^2-B_{m}b_m^2+2\sqrt{B_{n}B_{m}}b_nb_m  +B_{n}b_n^2-B_{m}b_n^2\over B_{n}-B_{m} }  \Big|\cr 
  & = &  \Big|   {   -  b_m^2+2\sqrt{B_{n}\over B_{m}}b_nb_m  - b_n^2\over {B_{n}\over B_{m}}-1 }  \Big|= \Big|   { -(b_n-b_m)^2+
2b_mb_n(\sqrt{B_{n}\over B_{m}}-1)\over {B_{n}\over B_{m}}-1 }  \Big|
  \cr &\le &    { 2(b_n^2+b_m^2)+ 2|b_m||b_n|(\sqrt{B_{n}\over B_{m}}-1)\over {B_{n}\over B_{m}}-1 }     
           . \end{eqnarray} 
 
Hence
\begin{eqnarray}\label{e7} & &\E_{(V,\e)}{\bf 1}_{\{B_n> n\theta,   B_n-B_m> \theta (n-m)\}} \Big|   e^{-{{\kappa'_n}^2\over
D^2(B_n/2)}}
  -
   e^{-{(\kappa'_n-\kappa'_m )^2\over D^2(B_{n }-B_{ m})/2 }}\Big|
\cr & \le &C \E_{(V,\e)}{\bf 1}_{\{B_n> n\theta,   B_n-B_m> \theta (n-m)\}} \bigg\{ {  b_n^2+b_m^2  \over {B_{n}\over B_{m}}-1 }    +{
 |b_m||b_n|  \over \sqrt{B_{n}\over B_{m}}-1  }    \bigg\}. 
 \end{eqnarray}
On the set $ \{B_n> n\theta,   B_n-B_m> \theta (n-m)\} $, we notice that 
$$ {     1\over 
\sqrt{B_{n}\over B_{m}}-1   } = {    \sqrt{B_{m}}\over 
\sqrt{B_{n} }-\sqrt{  B_{m}}   }=  {    \sqrt{B_{m}}(\sqrt{B_{n} }+\sqrt{  B_{m}})\over 
 {B_{n} }- {  B_{m}}   }\le  {    \sqrt{B_{m}}(\sqrt{B_{n} }+\sqrt{  B_{m}})\over 
 \theta(n-m)   }.$$
We also observe that $$\E S_m=\E_{(V,\e)}\E_{\!L}\big(W_m+ D\sum_{j=1}^{ m}\e_jL_j\big)=
\E_{(V,\e)}\big(W_m+ {DB_m\over 2}\big)=m\m .$$
Thus   $W_m+(DB_m/2)-m\m= W_m+(DB_m/2)-\E_{(V,\e)}\big(W_m+ {DB_m / 2}\big)$. 
 
Besides
\begin{eqnarray*} |b_j|&=& {\big|\kappa_j-j\m -\big(W_j+(B_j/2)-j\m\big)\big| \over
 \sqrt  {B_j} } 
\cr &\le & {C\over
 \sqrt  {B_j} }  \Big[ \sqrt j +\big|S'_j-\E_{(V,\e)} S'_j \big| \Big]  , 
\end{eqnarray*}
where we have  denoted $S'_n=W_m+(B_m/2)$. We have
 \begin{eqnarray*}  { |b_n||b_m| \over 
\sqrt{B_{n}\over B_{m}}-1   } &\le&   {C\over
 \sqrt  {B_nB_m} } {   
\sqrt{B_{m}}(\sqrt{B_{n} }+\sqrt{  B_{m}})\over 
 \theta(n-m)   } \cr & &\  \times \Big[ \sqrt n +\big|S'_n-\E_{(V,\e)} S'_n \big| \Big]   \Big[ \sqrt m +\big|S'_m-\E_{(V,\e)} S'_m
\big|
\Big]  
\cr 
&\le&     {   
 C\over 
 \theta(n-m)   } \cr & &\  \times \Big[ \sqrt n +\big|S'_n-\E_{(V,\e)} S'_n \big| \Big]   \Big[ \sqrt m +\big|S'_m-\E_{(V,\e)} S'_m
\big|
\Big]  \cr
&\le&     {   
 C\sqrt{nm}\over 
 \theta(n-m)   } \cr & &\  \times {\big[ \sqrt n +\big|S'_n-\E_{(V,\e)} S'_n \big| \big]    \big[ \sqrt m +\big|S'_m-\E_{(V,\e)} S'_m
\big|
\big]  \over\sqrt{nm} }\cr
 &\le&     {   
 C\sqrt{ m}(\sqrt{n }+\sqrt{ m})\over 
 \theta(n-m)   } \cr & &\  \times  \Big[ 1 +{\big|S'_n-\E_{(V,\e)} S'_n \big| \over \sqrt n}\Big]    \Big[1 +{\big|S'_m-\E_{(V,\e)}
S'_m
\big|
   \over\sqrt{ m} }\Big]\cr    &= &       {   C\over 
 \theta\big(\sqrt{n\over m}-1  \big) } \Big[ 1 +{\big|S'_n-\E_{(V,\e)} S'_n \big| \over \sqrt n}\Big]    \Big[1 +{\big|S'_m-\E_{(V,\e)}
S'_m
\big|
   \over\sqrt{ m} }\Big] .
\end{eqnarray*}
By the Cauchy-Schwarz inequality, 
$$\E_{(V,\e)}  {\big|S'_n-\E_{(V,\e)} S'_n \big| \over \sqrt n}\,  {\big|S'_m-\E_{(V,\e)}
S'_m
\big|
   \over\sqrt{ m} } $$
$$\le \Big[\E_{(V,\e)}  {\big|S'_n-\E_{(V,\e)} S'_n \big|^2 \over   n}\Big]^{1/2}    \Big[ \E_{(V,\e)}{\big|S'_m-\E_{(V,\e)}
S'_m
\big|^2
   \over  m  }\Big]^{1/2}\le C .$$
And also
$$\E_{(V,\e)}  {\big|S'_j-\E_{(V,\e)} S'_j \big| \over \sqrt j} \le \Big[\E_{(V,\e)}  {\big|S'_j-\E_{(V,\e)} S'_j \big|^2 \over  
j}\Big]^{1/2}  \le C .$$
Since
$$\E_{(V,\e)}  { |b_n||b_m| \over 
\sqrt{B_{n}\over B_{m}}-1   } \le     {   C_\theta\over 
 \sqrt{n\over m}-1   } \E_{(V,\e)}\Big[ 1 +{\big|S'_n-\E_{(V,\e)} S'_n \big| \over \sqrt n}\Big]    \Big[1 +{\big|S'_m-\E_{(V,\e)}
S'_m
\big|
   \over\sqrt{ m} }\Big] 
$$
we deduce
\begin{equation}\label{e8} \E_{(V,\e)}\ { |b_n||b_m| \over 
\sqrt{B_{n}\over B_{m}}-1   } \le     {   C_\theta\over 
 \sqrt{n\over m}-1   }  .
\end{equation}

Now  
 \begin{eqnarray*}  { |b_n| ^2\over 
 {B_{n}\over B_{m}}-1   } &\le&  {C\over
   {B_n} }  \Big[ \sqrt n +\big|S'_n-\E_{(V,\e)} S'_n \big| \Big]^2 { B_{m}\over 
  B_{n}    -B_{m}   }
\cr  &\le&   C  { nB_{m}\over 
  {B_n}(B_{n}    -B_{m})   } \Big[ 1 +{\big|S'_n-\E_{(V,\e)} S'_n \big|\over \sqrt n} \Big]^2   
\cr  &\le&   C  {  B_{m}\over 
   \theta^2(n-m)   } \Big[ 1 +{\big|S'_n-\E_{(V,\e)} S'_n \big|\over \sqrt n} \Big]^2 
\cr  &\le&   C  {  m\over 
   \theta^2(n-m)   } \Big[ 1 +{\big|S'_n-\E_{(V,\e)} S'_n \big|\over \sqrt n} \Big]^2 
\cr  & =&      { C\over 
   \theta^2({n\over m}-1)   } \Big[ 1 +{\big|S'_n-\E_{(V,\e)} S'_n \big|\over \sqrt n} \Big]^2
 \cr  &\le&      { C_\theta\over 
    \sqrt {n\over m}-1   } \Big[ 1 +{\big|S'_n-\E_{(V,\e)} S'_n \big|\over \sqrt n} \Big]^2 . 
\end{eqnarray*}
    Therefore
\begin{eqnarray}\label{e9}& &\E_{(V,\e)} {\bf 1}_{\{B_n> n\theta,   B_n-B_m> \theta (n-m)\}} {   |b_n| ^2\over 
 {B_{n}\over B_{m}}-1   }\cr
&\le&   { C_\theta\over 
    \sqrt {n\over m}-1   } \E_{(V,\e)}   \Big[ 1 +{\big|S'_n-\E_{(V,\e)} S'_n \big|\over \sqrt n} \Big]^2
\cr
&\le&   { C_\theta\over 
    \sqrt {n\over m}-1   }    \Big[ 1 +{1\over   n}\E_{(V,\e)}  \big|S'_n-\E_{(V,\e)} S'_n \big|^2  \Big] \cr
&\le&   { C_\theta\over 
    \sqrt {n\over m}-1   } .  
\end{eqnarray}

Next $|b_m|  \le   {C\over
 \sqrt  {B_m} }  \Big[ \sqrt m +\big|S'_m-\E_{(V,\e)} S'_m \big| \Big]  . 
 $
 \begin{eqnarray*}  { |b_m| ^2\over 
 {B_{n}\over B_{m}}-1   } &\le&  {C\over
   {B_m} }  \Big[ \sqrt m +\big|S'_m-\E_{(V,\e)} S'_m \big| \Big]^2 { B_{m}\over 
  B_{n}    -B_{m}   }
\cr  &\le&   C  { mB_{m}\over 
  {B_m}(B_{n}    -B_{m})   } \Big[ 1 +{\big|S'_m-\E_{(V,\e)} S'_m \big|\over \sqrt m} \Big]^2   
\cr  &\le&   C  {  m\over 
   \theta (n-m)   } \Big[ 1 +{\big|S'_m-\E_{(V,\e)} S'_m \big|\over \sqrt m}\Big]^2 
 \cr  & =&      { C\over 
   \theta ({n\over m}-1)   } \Big[ 1 +{\big|S'_m-\E_{(V,\e)} S'_m \big|\over \sqrt m} \Big]^2
 \cr  &\le&      { C_\theta\over 
    \sqrt {n\over m}-1   } \Big[ 1 +{\big|S'_m-\E_{(V,\e)} S'_m \big|\over \sqrt m} \Big]^2 . 
\end{eqnarray*}
    Therefore
\begin{eqnarray}\label{e10}& &\E_{(V,\e)} {\bf 1}_{\{B_n> n\theta,   B_n-B_m> \theta (n-m)\}} {   |b_m| ^2\over 
 {B_{n}\over B_{m}}-1   }\cr
&\le&   { C_\theta\over 
    \sqrt {n\over m}-1   } \E_{(V,\e)}   \Big[ 1 +{\big|S'_m-\E_{(V,\e)} S'_m \big|\over \sqrt m} \Big]^2
\cr
&\le&   { C_\theta\over 
    \sqrt {n\over m}-1   }    \Big[ 1 +{1\over   m}\E_{(V,\e)}  \big|S'_m-\E_{(V,\e)} S'_m \big|^2  \Big] \cr
&\le&   { C_\theta\over 
    \sqrt {n\over m}-1   } .  
\end{eqnarray}

By inserting estimates  (\ref{e8}),(\ref{e9}),(\ref{e10}) into  (\ref{e7}), we get
\begin{eqnarray}\label{e11} & &\E_{(V,\e)}{\bf 1}_{\{B_n> n\theta,   B_n-B_m> \theta (n-m)\}} \Big|   e^{-{{\kappa'_n}^2\over
D^2(B_n/2)}}
  -
   e^{-{(\kappa'_n-\kappa'_m )^2\over D^2(B_{n }-B_{ m})/2 }}\Big|
\cr & \le &{ C_\theta\over 
    \sqrt {n\over m}-1   }   .
\end{eqnarray} 
 This estimate along with (\ref{e5}) yields, in view of (\ref{e3}),
\begin{eqnarray}\label{e12} A''_{1} &\le & 2\rho^{n-m}+ { C_\theta\over 
    \sqrt {n\over m}-1   } 
   .
\end{eqnarray} 
And  with (\ref{e2}), (\ref{e6}), (\ref{e3}),  (\ref{e4})
\begin{eqnarray}\label{e13} |A''| &\le & C_{\theta  } \Big\{\rho^{n-m}  
 +   {   1\over 
 \sqrt{n\over m}-1   }+ \sqrt n \Big\{{1 \over
 n ^{3/2}}+   \rho^{  n-m }     +  {1 \over
 (n-m) ^{3/2}}\Big\}\Big\}
   .\cr & & 
\end{eqnarray}
Consequently, with   (\ref{e1})
\begin{eqnarray}\label{e14} |A' |+|A''| &\le & C_{\theta  } \Big\{\rho^{n-m}  
 +   {   1\over 
 \sqrt{n\over m}-1   } + \sqrt n \Big\{{1 \over
 n ^{3/2}}+  \rho^{  n-m }  \cr & &   +2^{-(n-m)} +  {1 \over
 (n-m) ^{3/2}}\Big\}\Big\}
   .
\end{eqnarray}
 Finally with  (\ref{e0}),  
\begin{eqnarray}\label{e15}  |A |  &\le &  C_{\theta  } \Big\{\rho^{n-m}  
 +   {   1\over 
 \sqrt{n\over m}-1   } + \sqrt n \Big\{{1 \over
 n ^{3/2}}+  \rho^{  n-m }  \cr & &   +2^{-(n-m)} +  {1 \over
 (n-m) ^{3/2}}\Big\}\Big\}
   .
\end{eqnarray}
And with (\ref{cov}), we obtain 
  \begin{eqnarray}\label{e14} \big|\E   Y_nY_m\big|  &\le &  C_{\theta  } \Big\{\rho^{n-m}  
 +   {   1\over 
 \sqrt{n\over m}-1   } + \sqrt n \Big\{{1 \over
 n ^{3/2}}+  \rho^{  n-m }  \cr & &   +2^{-(n-m)} +  {1 \over
 (n-m) ^{3/2}}\Big\}\Big\}
\cr 
 &\le &  C_{\theta  } \Big\{     {   1\over 
 \sqrt{n\over m}-1   } +      {n^{1/2}  \over
 (n-m) ^{3/2}} \Big\}
   .
\end{eqnarray}
This proves (\ref{C1}).
Now let $0<c<1$. Let $m\le cn$. Then
$$   {1\over    \sqrt{ n/m}-1   }\le \Big({1\over 1-\sqrt c} \Big) \, \sqrt{m\over n}. $$
 Further
  $$     {\sqrt n   \over
 (n-m) ^{3/2}}=  \sqrt {n\over n-m}\     {  1\over
 (n-m)  }  =  {1\over{ (1- m/n)^{3/2} }}
\ {1 \over n}\le {1\over{ (1- c)^{3/2} }}
\ {1  \over n  }  . $$
By incorporating these estimates into (\ref{e10}) we get 
\begin{eqnarray}\label{e11} \big|\E  Y_nY_m\big|  &\le & C_{\theta  }          
\,
\sqrt{m\over n}  .
\end{eqnarray} 
This establishes (\ref{C2}). The proof is now complete. 
\begin{remark} Although the rate of approximation in the  local limit theorem (\ref{lltber}) for Bernoulli sums used in the proof, be quite sharp, it seems worth to indicate 
 that a better   rate can be obtained,  with a different centering term however.   More precisely, 
$$\sup_{j}\Big|\P\{B_n= k\} - { 1\over  \pi}  \int_\R  e^{i(2k-n)v -  n ({v^2 \over   2}+{ v^4 \over 12} ) }\, d v\Big|\le C  {
\log^{7/2} n 
\over n^{ 5/2}}.  $$  
  The constant $C$ is  absolute.   As   $ \int_\R  e^{ijv -  n ({v^2 \over   2}+{ v^4 \over 12} ) }\, d v= {1\over \sqrt n}\int_\R 
e^{i{j\over \sqrt n} w -     {w^2
\over   2} - { w^4 \over 12n}  }\, d w $, 
($j=2k-n$),     a corrective factor $ e^{  - { w^4 \over 12n}  }$  depending on $n$ appears in this   formulation. 

\end{remark}

\section{Application} 
In this section, we deduce   from Theorem \ref{t1} an almost sure local limit theorem for iid   square integrable random variables taking
values in an arbitrary lattice $\mathcal L( v_0,D)$. In
\cite{DK} (sections 1,2), the notion of almost sure local limit theorem is introduced in analogy with the usual almost sure central limit
theorem:
 "A stationary sequence of random variables $\{X_n, n\ge 1\}$ taking values in $\R$ or $\Z$ with partial sums $S_n =X_1= \ldots +X_n$ satisfies an almost
sure local limit theorem, if there exist sequences
$\{a_n, n\ge 1\}$ in
$\R$ and
$\{b_n, n\ge 1\}$ in $\R^+$ satisfying $b_n\to \infty$, such that 
\begin{equation}\label{asllt}\lim_{N\to \infty}{1\over \log N}\sum_{n=1}^N {b_n\over n} \chi\{ S_n\in k_n+I\}\buildrel{a.s.}\over{ =}g(\kappa)|I| \quad {\rm
as}\quad {k_n-a_n\over b_n}\to
\kappa, 
\end{equation}  where $g$ denotes some density and $I\subset \R$ is some bounded interval.  Further $|I|$ denotes the length of the interval $I$
in the case where $X_1$ is real valued and the counting measure of $I$ otherwise." 
\smallskip \par  In what follows, we restrict our consideration to the iid case. We assume
that 
  $\P\{X_1\in \mathcal L(v_0,D)\}=1$, $\mathcal L(v_0,D)\subset \Z$.
 We also assume that $\s^2= \E X_1^2<\infty$ and let  
$\m=\E X_1$. 
\smallskip \par 
We begin with an elementary observation. Let $v_0=0$ for simplicity. As $g$ is a density, there are reals $\kappa$ such that
$g(\kappa)\not= 0$.  Clearly, if
$\{ k_n, n\ge 1\}$ is such that  ${k_n-a_n\over
b_n}\to
\kappa$, then   any  sequence $\{\k_n, n\ge 1\}$, $\k_n=k_n +u_n$ where $u_n$ are uniformly bounded  also satisfies this. But we can
arrange the $u_n$ so that $\k_n\notin \mathcal L( 0,D)$ for all $n$. Therefore $\P\{S_n=\k_n\}\equiv 0$. If $I=[-\d, \d]$ with $ \d<1/2$,
then
$|I|=1$ and we see that, {\it no matter} the sequences $\{a_n, n\ge 1\}$ and
$\{b_n, n\ge 1\}$ are, property (\ref{asllt}) cannot hold for the sequence  $\{\k_n, n\ge 1\}$, since   
$$ \lim_{N\to \infty}{1\over \log N}\sum_{n=1}^N {b_n\over n} \chi\{ S_n\in k_n+I\}\buildrel{a.s.}\over{ =}0\not=g(\kappa)|I| .
$$\smallskip\par 
It thus appears   necessary (also when $v_0$ is arbitrary) to complete the above definition by introducing  the additional requirement: 
\begin{equation}\label{addasllt} \k_n \in \mathcal L(nv_0,D), \qq n= 1,2, \ldots. 
\end{equation}
Then   $k_n+I\subset L(nv_0,D)$ if and only if $ I\subset L(0,D)$. And also to change $|I|$ for $\#\{ I\cap
\mathcal L( 0,D)\}$. Then (\ref{asllt}) is modified   as follows:
\begin{equation}\label{asllt1}\lim_{N\to \infty}{1\over \log N}\sum_{n=1}^N {b_n\over n} \chi\{ S_n\in k_n+I\}\buildrel{a.s.}\over{
=}g(\kappa)\#\{ I\cap \mathcal  L(  0,D)\}, \  {\rm as}\  {k_n-a_n\over b_n}\to
\kappa,  
\end{equation}
where $I$ is a bounded interval. This is    coherent with the   local limit theorem 
which   relies upon the three parameters $\m$, $\s$ and the
 (maximal) span of $X_1$.   It is obvious
by invoking a simple additivity argument,  that (\ref{asllt1}) holds for any bounded interval $I$ if and only if 
\begin{equation}\label{asllt2}\lim_{N\to \infty}{1\over \log N}\sum_{n=1}^N {b_n\over n} \chi\{ S_n= k_n \}\buildrel{a.s.}\over{
=}g(\kappa) , \  {\rm as}\  {k_n-a_n\over b_n}\to
\kappa.  
\end{equation}
\smallskip \par  As mentionned by the authors  in \cite{DK}, p.146, the existence of almost sure local limit theorems is of fundamanental
interest.  A recent application to a problem of representation of integers in given in \cite{W1}. By (\ref{llt}),  the
 local limit theorem holds,
 and if
$ \k_n \in \mathcal L(nv_0,D)$ is a sequence which   verifies condition (\ref{2}), namely  $\lim_{n\to \infty} { \k_n-n\m  
\over   
\sqrt{  n}  }=
\k$, then
 \begin{equation}\label{llt1}  \lim_{n\to \infty}  \sqrt n \P\{S_n=\k_n\}={D\over  \sqrt{ 2\pi}\s}e^{- 
{ \k ^2\over  2   \s^2} } . 
\end{equation}

 We deduce from Theorem
\ref{t1}  an almost sure   local limit theorem   for i.i.d. square integrable random variables taking
values in an arbitrary lattice $\mathcal L( v_0,D)$.   
     \begin{theorem} \label{t2} Let $X$ be a square integrable lattice distributed random variable with maximal span $D$. Let $\m =\E X$,
$\s^2=\E X^2-(\E X)^2$. Let also 
$ \{X_k, k\ge 1\}$ be independent copies of
$X$, and put
$S_n=X_1+\ldots +X_n$, $n\ge 1$.   Then    
$$ \lim_{ N\to \infty}{1\over    \log N } \sum_{ n\le
N}  {  1 \over \sqrt n} {\bf 1}_{\{S_n=\kappa_n\}} \buildrel{a.s.}\over {=}{D\over 
\sqrt{ 2\pi}\s}e^{-  {\k^2/ ( 2\s^2 ) } },$$
  for any  sequence of integers $\{\k_n, n\ge 1\}$     such that (\ref{2}) holds. 
 \end{theorem}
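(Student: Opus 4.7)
The plan is to decompose
\[
\Sigma_N := \frac{1}{\log N}\sum_{n\le N}\frac{{\bf 1}_{\{S_n=\kappa_n\}}}{\sqrt n} \;=\; A_N + B_N,
\]
where $A_N := \frac{1}{\log N}\sum_{n\le N}\P\{S_n=\kappa_n\}/\sqrt n$ is deterministic and $B_N := \frac{1}{\log N}\sum_{n\le N}Y_n/n$ is centred, with $Y_n$ as in (\ref{Y}). By the sharp form (\ref{llt1}) of the local limit theorem, $\sqrt n\,\P\{S_n=\kappa_n\}\to g(\kappa):=\frac{D}{\sqrt{2\pi}\sigma}e^{-\kappa^2/(2\sigma^2)}$. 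Rewriting $A_N = (\log N)^{-1}\sum_{n\le N}n^{-1}\cdot \sqrt n\,\P\{S_n=\kappa_n\}$ and invoking $\sum_{n\le N}n^{-1}=\log N+O(1)$, a standard Toeplitz/Ces\`aro argument gives $A_N\to g(\kappa)$ deterministically.

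The heart of the proof is to show $B_N\to 0$ almost surely; equivalently, $T_N:=\sum_{n\le N}Y_n/n=o(\log N)$ a.s. I would bound $\E T_N^2$ via Theorem~\ref{t1} and Corollary~\ref{cort1}. The diagonal $\sum_{n\le N}\E Y_n^2/n^2$ is bounded since $\E Y_n^2\le n\,\P\{S_n=\kappa_n\}=O(\sqrt n)$. For the off-diagonal, splitting at $m=n/2$: the range $m\le n/2$ contributes $O(\log N)$ via Corollary~\ref{cort1} (which gives $|\E Y_n Y_m|\le C\sqrt{m/n}$), while the near-diagonal range $n/2<m<n$ contributes $O((\log N)^2)$ via the full bound of Theorem~\ref{t1}. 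A block-wise refinement of the same computation yields the super-additive variance bound
\[
\E(T_n-T_m)^2 \le C\bigl((\log n)^2-(\log m)^2\bigr),\qquad 1\le m<n,
\]
i.e.\ super-additivity in $\psi(n)=(\log n)^2$. From here, a G\'al--Koksma--Longnecker--Serfling type almost-sure convergence theorem combined with a geometric-block subsequence $N_k=2^k$ (on which the block variance is only $O(\log N_k)$) and a M\'oricz maximal inequality to interpolate inside the blocks yields $T_N=o(\log N)$ a.s., whence $B_N\to 0$ and $\Sigma_N\to g(\kappa)$ a.s.

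The main obstacle is precisely this passage from the variance estimate to the a.s.\ statement: the bound $\E T_N^2=O((\log N)^2)$ is borderline, so a direct Chebyshev--Borel--Cantelli step along any subsequence fails to be summable. The difficulty comes from the large near-diagonal covariances $|\E Y_n Y_m|\asymp n/(n-m)$ for $m$ close to $n$, which are unavoidable when only second moments are assumed; it is the super-additive structure of $\E(T_n-T_m)^2$ supplied by Theorem~\ref{t1}, together with a geometric-block subsequence and a sharp maximal inequality, that makes the argument go through, whereas in the stronger-moment case of \cite{GW3} the analogous step is easier because the variance there is already $O(\log N)$.
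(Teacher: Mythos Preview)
Two genuine gaps.

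\textbf{(i) Missing reduction.} Theorem~\ref{t1} and Corollary~\ref{cort1} are proved only under hypothesis~(\ref{basber}), which need not hold for a general lattice variable $X$ (cf.\ Remark~\ref{bbl}). Your argument therefore establishes the result only under this extra assumption. The paper removes it by a separate, non-trivial reduction: since $X':=S_{n_\kappa}$ satisfies~(\ref{basber}) for a suitable $n_\kappa$, one first proves the almost sure local limit theorem for the partial sums of independent copies of $X'$, then transfers it to $\{S_n\}$ by writing $\{S_{a+mn_\kappa}\}_{m\ge 1}\buildrel{\mathcal D}\over{=}\{S_a+S'_m\}_{m\ge 1}$ for each residue $0\le a<n_\kappa$, conditioning on the (random) shift $S_a$, and finally summing the $n_\kappa$ resulting statements over residues. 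You have not addressed this step.

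\textbf{(ii) The near-diagonal bound is too weak.} You take $|\E Y_nY_m|\asymp n/(n-m)$ near the diagonal from the first term of Theorem~\ref{t1}, obtain $\E T_N^2=O((\log N)^2)$, and then try to extract $T_N=o(\log N)$ from a G\'al--Koksma/M\'oricz scheme. This cannot close: with super-additivity in $\psi(n)=(\log n)^2$ the G\'al--Koksma machinery yields at best $T_N=O(\log N\,(\log\log N)^{3/2+\varepsilon})$, and your block variance $\E Z_k^2=O(k)$ still gives $\E T_{2^K}^2=O(K^2)$, so no subsequence trick rescues the conclusion. The point you are missing is that the near-diagonal covariance is in fact much smaller: from~(\ref{basic}) together with the uniform bound $\sup_{k,N}\sqrt{k}\,\P\{S_k=N\}\le C$ (an immediate consequence of~(\ref{llt})) one gets the elementary estimate
\[
|\E Y_nY_m|\le C\sqrt{\tfrac{n}{n-m}}.
\]
With this in hand the dyadic blocks $Z_j=\sum_{2^j\le n<2^{j+1}}Y_n/n$ satisfy $\E Z_j^2=O(1)$, and combined with the geometric decay $|\E Z_iZ_j|\le C\,2^{-|i-j|/2}$ supplied by Corollary~\ref{cort1} for distinct blocks, $\{Z_j\}$ is genuinely quasi-orthogonal. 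The paper then applies Rademacher--Menchov and Kronecker's lemma to obtain $\sum_{j\le K}Z_j=o(K^{1/2}(\log K)^{b})=o(K)$, i.e.\ $T_N=o(\log N)$ along dyadic $N$; monotonicity of $N\mapsto\sum_{n\le N}n^{-1/2}{\bf 1}_{\{S_n=\kappa_n\}}$ then makes the interpolation to all $N$ routine. Your assertion that the large near-diagonal covariances are ``unavoidable when only second moments are assumed'' is thus mistaken, and it is precisely this overestimate that blocks your approach.
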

\begin{remark}\rm In \cite{DK},  Corollary 2 (see also p.148-149) the authors show that "the almost sure local limit theorem holds for iid
sequences of square integrable $\Z$-valued random variables, that is:
$$ \lim_{ N\to \infty}{1\over    \log N } \sum_{ n\le
N}  {  1 \over \sqrt n} {\bf 1}_{\{S_n=\kappa_n\}} \buildrel{a.s.}\over {=}{1\over 
\sqrt{ 2\pi}\s}e^{-  {\k^2/ ( 2\s^2 ) } }\quad {\rm if}\ {k_n-n\m\over \sqrt n}\to
 \kappa."$$
By the remarks made before concerning (\ref{asllt}), this statement needs a correction.
The proof is sketched as follows. Let
$\phi$ denote the characteristic function of $X$. By Fourier inversion formula
 $\P\{S_n=k\}=\int_0^1e^{-2i\pi kt}\phi^n(t) dt$.   
 Thus 
$$\sqrt {nm}\P\{ S_n=k_n, S_m=k_m\}=\sqrt {nm}\P\{  S_m=k_m\}\P\{ S_n-  S_m=k_n-k_m\}$$ $$
=\sqrt {nm}\int_0^1e^{-2i\pi k_mt}\phi^m(t) dt\int_0^1e^{-2i\pi (k_n-k_m)t}\phi^{n-m}(t) dt$$  
$$
=\sqrt {n\over n-m }\int_0^{\sqrt m}e^{-2i\pi {k_m\over \sqrt m}u}\phi^m({u\over \sqrt m}) du\int_0^{\sqrt{n-m}}e^{-2i\pi ({k_n-k_m\over
\sqrt{n-m}})u}\phi^{n-m}({v\over \sqrt{n-m}}) dv.$$
By the CLT, 
 $$ \lim_{m\to \infty}\phi^m({u\over \sqrt m})= {e^{-\k^2/2}\over \sqrt{2\pi}} , \qq \lim_{n-m\to \infty} \phi^{n-m}({v\over \sqrt{n-m}}) =
 {e^{-\k^2/2}\over \sqrt{2\pi}} .$$
   Next it is claimed that it  implies
$$"\sqrt {nm}\P\{ S_n=k_n, S_m=k_m\} \to \sqrt {n\over n-m }{1\over 2\pi}e^{-\k^2}." $$
We presume that this should rather be $\sqrt {nm}\P\{ S_n=k_n, S_m=k_m\} \to  {1\over 2\pi}e^{-\k^2}$. 
However, we have   not been able to check this. From our main result,
we only get
$$\lim_{n,m\to \infty\atop {n\over m}\to \infty} \sqrt{nm} \, \Big|\P\{S_n=\k_n, S_m=\k_m\}-\P\{S_n=\k_n \}\P\{  S_m=\k_m\} \Big|  
=0   .
$$
The authors argue that the proof maybe be continued as in the Bernoulli case where a theorem of Mori is invoked. This one requires to
have at disposal a correlation bound. For having tried to apply Mori's result with our correlation inequality in Theorem \ref{t1},
this only allowed us to treat   subsequences
$n=n_k$ with
$n_{k=1}/n_k\ge
\sqrt 2$.  We  believe that the proof needs some complementary explanations.\end{remark} The
notion of     quasi-orthogonal system  is used in the proof of Theorem \ref{t2}. We recall it briefly.  
    A sequence $  \underline f=\{f_n, n\ge 1\}$   in an Hilbert space $H$ is called (see \cite{KSZ} or \cite{W2} p.22)   a
quasi-orthogonal system  if the quadratic form   on $\ell_2$
defined by
 $\{x_h, h\ge 1\}\mapsto \|\sum_{h }  x_h   f_h\|^2 $  is
bounded.   
  A necessary and sufficient condition for $  \underline f$ to be
quasi-orthogonal  is that the series   $\sum c_nf_n$ converges in
$H$, for any sequence $\{c_n, n\ge 1\} $ such that  $\sum c_n^2
<\infty$. This follows from the fact that   $\underline f$ is quasi-orthogonal if and only if there
exists a constant $L$ depending on $\underline f$ only, such that
   $$ \Big\|\sum_{i\le n}
x_if_i\Big\| \le L \Big(\sum_{i\le n}|x_i|^2 \Big)^{1/2}.$$ 
     Further, as observed in  \cite{KSZ}: "{\it Every theorem on
orthogonal systems  whose proof depends only on Bessel's inequality,
holds for quasi-orthogonal systems}".
   In particular for $H= L^2(  X,\A, \mu)$,  $(  X,\A, \mu)$ a probability space,
  Rademacher--Menchov's theorem applies. We recall it (see \cite{W2} p.363 for instance).
\begin{lemma} Let $\{f_n, n\ge 1\}\subset H$  be an orthogonal sequence.
  The series $\sum c_nf_n $ converges almost everywhere  provided
that $\sum c_n^2 \log^2 n<\infty$.
\end{lemma}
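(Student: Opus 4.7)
The plan is to carry out the classical Rademacher--Menchov dyadic decomposition argument. Without loss of generality I take $\|f_n\|_H=1$ (otherwise one absorbs $\|f_n\|$ into $c_n$, at which point the hypothesis reads $\sum c_n^2\|f_n\|^2\log^2 n<\infty$). Writing $S_n=\sum_{k\le n}c_kf_k$, the strategy is to control both the dyadic subsequence $\{S_{2^K}\}_K$ and the in-block fluctuations $M_K:=\max_{2^K<n\le 2^{K+1}}\|S_n-S_{2^K}\|_H$, and then combine.

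The heart of the proof is Menchov's maximal inequality:
$$\E\max_{1\le n\le N}\|S_n\|_H^2\le C(\log N)^2\sum_{i=1}^N c_i^2.$$
I would prove it as follows. Any $n\le 2^K$ has a binary expansion $n=2^{k_1}+\cdots+2^{k_R}$ with $k_1>\cdots>k_R\ge 0$ and $R\le K$, so $S_n$ is a sum of at most $K$ block sums $T_{k_r,j_r}=\sum_{i\in I_{k_r,j_r}}c_if_i$ indexed by disjoint dyadic intervals. Cauchy--Schwarz gives $\|S_n\|_H^2\le K\sum_r\|T_{k_r,j_r}\|_H^2$, hence $\max_n\|S_n\|_H^2\le K\sum_{k=0}^{K-1}\max_j\|T_{k,j}\|_H^2$. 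Taking expectations, pulling the max inside by $\max\le\sum$, and invoking the orthogonality identity $\sum_j\E\|T_{k,j}\|_H^2=\sum_{i\le 2^K}c_i^2$ produces the extra factor $K$, yielding the $(\log N)^2$ constant.

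With this in hand, two routine consequences finish the proof. First, applying the maximal inequality to the block of indices $I_K=\{2^K+1,\dots,2^{K+1}\}$ gives $\E M_K^2\le CK^2\sum_{i\in I_K}c_i^2$, so summing and regrouping by $i$,
$$\sum_K\E M_K^2\le C'\sum_i c_i^2\log^2(i)<\infty$$
by hypothesis; monotone convergence then yields $M_K\to 0$ a.e. Second, the increments $D_K=S_{2^{K+1}}-S_{2^K}$ are pairwise orthogonal with $\E\|D_K\|_H^2=\sum_{i\in I_K}c_i^2$, and for any fixed $\epsilon\in(0,1/2)$ one has $\E\sum_K K^{1+2\epsilon}\|D_K\|_H^2\le C''\sum_i c_i^2\log^{1+2\epsilon}(i)<\infty$; Cauchy--Schwarz against $K^{-(1+2\epsilon)/2}$ then gives $\sum_K\|D_K\|_H<\infty$ a.e., so $\{S_{2^K}\}$ is Cauchy a.e. Combining: for $2^K<n\le 2^{K+1}$, $S_n=S_{2^K}+O(M_K)$, and both pieces converge a.e., whence $S_n$ does.

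The main obstacle is Step~1, the maximal inequality with the sharp $(\log N)^2$ constant; it is what forces the $\log^2 n$ weight in the hypothesis and requires the combinatorial bookkeeping of the binary expansion. Steps~2 and~3 are standard second-moment/Borel--Cantelli arguments, and the passage from orthonormal to quasi-orthogonal (as needed in Section~4 of the paper) costs only a universal constant via the quadratic-form boundedness property cited right before the lemma.
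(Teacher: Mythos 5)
Your argument is correct, and it is the classical Rademacher--Menchov proof: the maximal inequality via binary expansion of $n$ into at most $\log_2 N$ disjoint dyadic blocks plus Cauchy--Schwarz, then a.e.\ control of the in-block oscillations $M_K$ by summing the maximal inequality over blocks, and a.e.\ convergence of the dyadic subsequence $S_{2^K}$ by a weighted Cauchy--Schwarz on the orthogonal increments $D_K$. There is nothing in the paper to compare it with: the lemma is stated there without proof, with a reference (\cite{W2}, p.~363), so you are supplying the standard argument the author merely cites. Two small points of hygiene. First, your notation $\E\max_{n\le N}\|S_n\|_H^2$ conflates the Hilbert norm with the pointwise modulus; the maximal inequality must be read in $H=L^2(X,\A,\mu)$ as $\int_X \max_{n\le N}|S_n(x)|^2\,\mu(dx)\le C(\log N)^2\sum_{i\le N}c_i^2$ (as the paper itself notes, a.e.\ convergence only makes sense in that setting), and likewise $M_K$ and $D_K$ are to be handled pointwise. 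Second, your ``WLOG $\|f_n\|_H=1$'' silently strengthens the hypothesis when the $\|f_n\|$ are unbounded (the honest hypothesis after renormalization is $\sum c_n^2\|f_n\|^2\log^2 n<\infty$); this is really a looseness of the lemma as stated, and it is immaterial for the paper's use, since there the system is quasi-orthogonal and, as you correctly observe, your proof uses orthogonality only through Bessel-type identities $\int|T_{k,j}|^2=\sum_{i\in I_{k,j}}c_i^2$, which for a quasi-orthogonal system become inequalities with the constant $L^2$ --- exactly the point of the Kac--Salem--Zygmund remark quoted in the paper.
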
  

\begin{proof} We first give the proof     under the additional assumption (\ref{basber}). Next we establish the result without this
one. Assume thus, at first,  that assumption (\ref{basber}) is fulfilled; the  proof is then   identical to the one of Theorem 1 in
\cite{GW3}. 
  Put for any positive integer
$j$
$$ Z_j=\sum_{2^j\le n<2^{j+1}}{Y_n\over n} . $$
By   (\ref{llt}),
 $$ \E Y_n^2=n
\P\{S_n=\kappa_n\}\big(1-\P\{S_n=\kappa_n\} \big) ={\mathcal O}(\sqrt n). 
$$
This and the   second inequality of Theorem \ref{t1}  imply that    $\{Z_j, j\ge 1\}$ is a quasi-orthogonal system. As 
Rademacher-Menchov Theorem applies to quasi-orthogonal systems, the   series 
$$\sum_j {Z_j\over   j^{1/2} (\log j)^{ b} } 
$$ thus converges   almost surely if $b>3/2$. By Kronecker's Lemma
$${1\over   N^{1/2} (\log N)^{ b} }\sum_{j=1}^N Z_j  ={1\over   N^{1/2} (\log N)^{ b} } \sum_{  1\le n<2^{N+1} }{Y_n\over n}\ \to 0, 
$$ as $N$ tends to infinity, almost surely. It is then a routine calculation to derive from this that
 $$\lim_{  t\to \infty} {  1 \over \log t}\,
\sum_{ n\le t}{  1 \over
\sqrt n} {\bf 1}_{\{S_n=\kappa_n\}} \buildrel{a.s.}\over
{=} g(\k)  .
$$
 \smallskip\par
Now we pass to the general case. On the basis of Remark \ref{llt}, we may "change" $X$ for $X'=S_{n_\k}$. But after  this is not so simple as it looks, 
and some extra work is necessary, in order to make this step precise. 
Let $X'_1,X'_2 \ldots,$ be independent copies of $X'$, which we assume to be
also independent from the sequence $X_1,X_2 \ldots,$  and denote similarly
$S'_m=X'_1+\ldots+X'_m$, $m\ge 1$.  
The first point to observe is that, given
$0\le a<n_\k$, the sequence 
$\{S_{a+ mn_\k},m \ge 1\}$ has same law as the sequence $\{S_{a }+S'_m,m \ge 1\}$. It is indeed immediate if we write that  $S_{a+ mn_\k}=S_a+ (S_{a+ 
n_\k}-S_{a })+\ldots (S_{a+ mn_\k}-S_{a+ (m-1)n_\k})$, (and not $S_{a+ mn_\k}= S_{n_\k } +\ldots (S_{  mn_\k}-S_{  (m-1)n_\k})+ 
  (S_{ a+ mn_\k}-S_{ mn_\k})$!). Like this, we are thus preliminary led to consider the sequence $\{S_{a }+S'_m,m \ge 1\}$. 
But this one is a bit outside
from our framework and we have to understand more the role played by  
  the additional independent term $S_a$. 
 Let    $ \k_n \in L(nv_0,D)$, $n=1,2,\ldots$  be a  sequence of integers   such that 
$$ \lim_{n\to \infty}   { \k_n-  n\m   \over    \sqrt{  n}  } = \k . $$ 
Then for any $0\le a<n_\k$,
$$ \lim_{m\to \infty}   { \k_{a+ mn_\k}-  (a+ mn_\k)\m   \over    \sqrt{  m}  } = \k\sqrt{n_\k}. $$ 
  Further,  not only   $ \lim_{m\to \infty}   {   \k_{a+ mn_\k}-    mn_\k \m   \over    \sqrt{ 
m}  } =
\k\sqrt{n_\k}$, but also for any $0\le a<n_\k$,
$$ \lim_{m\to \infty}   {       \k_{a+ mn_\k} -S_a -    mn_\k \m   \over    \sqrt{  m}  } \buildrel{\rm
a.s.}\over {=} \k \sqrt{n_\k}.$$
  By    noticing that
$\{S'_m, m\ge 1\}$ has same law as $\{S_{mn_\k}, m\ge
1\}$, next applying (\ref{llt}) to $X$    and specifying it for the subsequence $\{mn_\k, m\ge 1\} $, we get  
 \begin{equation} \lim_{m\to \infty}\sup_{N=v_0mn_\k+Dk } \Big|  \sqrt m \P\{S'_m=N\}-{D\over \s  \sqrt{ 2\pi n_\k} }e^{- 
{(N-mn_\k\m )^2\over  2 mn_\k \s^2} }\Big|= 0 . 
 \end{equation}
 Let $\tilde
\k_m=  
\k_{a+ mn_\k} -S_a$,
$0\le a<n_\k$ being fixed. We have $\k_{a+ mn_\k}\in L((a+ mn_\k)v_0,D)$,  $S_a\in L( a  v_0,D)$, so $\tilde\k_m \in L(mn_\k v_0,D)$.  Thus,
\begin{eqnarray*} \lim_{m\to \infty}   \sqrt{ m n_\k}\P\{S'_m=\tilde \k_m\} &\buildrel{a.s.}\over {=}&\lim_{m\to \infty} {D\over \s  \sqrt{ 2\pi  }
}e^{-  {(\tilde \k_m-mn_\k\m )^2\over  2 m  \s^2} }  \buildrel{a.s.}\over {=} {D\over \s\sqrt{ 2\pi  } }e^{-  {\k^2/  ( 2 \s^2 ) } }.
\end{eqnarray*}
    Instead of considering $Y_m= \sqrt m \big({\bf
1}_{\{S'_m=\kappa_m\}}-\P\{S'_m=\kappa_m\}
\big)$, we rather work with
$$Y'_m=     \sqrt{ m n_\k}
\big({\bf 1}_{\{S'_m=\kappa_m\}}-\P\{S'_m=\kappa_m\} \big)  .
$$ 
 This   amounts to the same, apart from  the constant factor $\sqrt{n_\k}$. 
By the first step,  
\begin{eqnarray}\label{4}\lim_{N\to \infty}{1\over \log N}\sum_{m=1}^N {Y'_m\over m}    & \buildrel{\rm
a.s.}\over {=}&0 . 
\end{eqnarray}
Thus
 $$ \lim_{N\to \infty}   {1\over \log N}\sum_{m=1}^N  \Big({  \sqrt{ m n_\k}\over
  m}{\bf 1}_{\{S'_m=\kappa_m-S_a\}}  -  {  \sqrt{ m n_\k}\P\{S'_m=\kappa_m-S_a\}\over   m}  \Big)\buildrel{\rm a.s.}\over {=}0.$$
But
$$ \lim_{N\to \infty}   {1\over \log N}\sum_{m=1}^N    {  \sqrt{ m n_\k}\P\{S'_m=\kappa_m-S_a\}\over   m} \buildrel{\rm a.s.}\over {=}{D\over \s\sqrt{
2\pi  } }e^{-  {\k^2/  ( 2 \s^2 ) } }.$$
Therefore
  $$  \lim_{N\to
\infty}   {1\over
\log N}\sum_{m=1}^N { \sqrt{ m n_\k}\over   m}{\bf 1}_{\{ S_a+S'_m=\kappa_m\}}\buildrel{\rm a.s.}\over {=}{D\over \s\sqrt{
2\pi  } }e^{-  {\k^2/  ( 2 \s^2 ) } }.$$  
   We deduce  that
$$ \lim_{ N\to \infty}{1\over    \log N } \sum_{ m\le 
N}   { \sqrt{ m n_\k}\over   m}{\bf 1}_{\{S_{a+ mn_\k}=\k_{a+ mn_\k}\}} \buildrel{\rm a.s.}\over {=}{D\over \s\sqrt{
2\pi  } }e^{-  {\k^2/  ( 2 \s^2 ) } }.$$
Now divide both sides by $n_\k$. We get 
$$ \lim_{ N\to \infty}{1\over    \log N } \sum_{ m\le 
N}   { 1\over  \sqrt{ m n_\k}}{\bf 1}_{\{S_{a+ mn_\k}=\k_{a+ mn_\k}\}} \buildrel{\rm a.s.}\over {=}{D\over n_\k\s\sqrt{
2\pi  } }e^{-  {\k^2/  ( 2 \s^2 ) } }.$$
 But this in turn also implies 
$$ \lim_{ N\to \infty}{1\over    \log N } \sum_{ m\le
N}   { 1\over   \sqrt{ a+ mn_\k}}{\bf 1}_{\{S_{a+ mn_\k}=\k_{a+ mn_\k}\}} \buildrel{\rm a.s.}\over {=}{D\over n_\k\s\sqrt{
2\pi  } }e^{-  {\k^2/  ( 2 \s^2 ) } }.$$
Now $\log N\sim \log (N+1)n_\k$, and by summing the latter over $0\le a<n_\k$, we get  
$$ \lim_{ N\to \infty}{1\over    \log (N+1)n_\k }\sum_{0\le a<n_\k}   \sum_{ m\le
N}   { 1\over   \sqrt{ a+ mn_\k}}{\bf 1}_{\{S_{a+ mn_\k}=\kappa_m\}}
$$
$$= \lim_{ N\to \infty}{1\over    \log (N+1)n_\k }   \sum_{ n\le
(N+1)n_\k}   {1\over   \sqrt{n}}{\bf 1}_{\{S_{n}=\kappa_n\}} \buildrel{\rm a.s.}\over {=}{D\over  \s\sqrt{
2\pi  } }e^{-  {\k^2/  ( 2 \s^2 ) } }.$$
 Hence 
  Theorem \ref{t2} is proved. 
 
\end{proof}


{\baselineskip 9pt

\end{document}